\newcommand{\C}{\mathcal{C}} 
\newcommand{\N}{\mathbb{N}} 
\newcommand{\Sd}{S_\delta} 
\newcommand{\DSL}{\mathcal{D}^{LSL}} 
\newcommand{\CSL}{\mathcal{C}^{LSL}} 
\newcommand{\unit}{[0,1]} 
\newcommand{\borelftwo}{\mathcal{B}(\unit^2)}  
\newcommand{\borelf}{\mathcal{B}(\unit)}  
\newtheorem{Theorem}{Theorem}[section]
\newtheorem{Definition}[Theorem]{Definition} 
\newtheorem{Lemma}[Theorem]{Lemma}	
\theoremstyle{definition}
\newtheorem{example}[Theorem]{Example}
\theoremstyle{definition}
\newtheorem{remark}[Theorem]{Remark} 
\theoremstyle{remark}
\providecommand{\keywords}[1]
{
  \small	
  \textbf{\textit{Keywords }} #1
}
\renewcommand{\maketitle}{\bgroup\setlength{\parindent}{0pt}
\begin{center}
  \textbf{\@title}
\end{center}

\begin{flushleft}
	\@author
\end{flushleft}\egroup
}
\title{\begin{LARGE}On bivariate lower semilinear copulas and the star product\end{LARGE}}
\date{}
\author[1,a]{Lea Maislinger}
\author[1,b]{Wolfgang Trutschnig} 
\affil[1]{\begin{small} Department of Artificial Intelligence and Human Interfaces, University of Salzburg, Austria \end{small} \bigskip}
\affil[a]{\begin{small}\url{lea.maislinger@plus.ac.at}\end{small}}
\affil[b]{\begin{small}\url{wolfgang@trutschnig.net}\end{small}}
\begin{document}

\maketitle


\begin{abstract}
\noindent We revisit the family $\mathcal{C}^{LSL}$ of all bivariate lower semilinear 
(LSL) copulas first introduced by Durante et al. in 2008 and, using the 
characterization of LSL copulas in terms of diagonals with specific properties, derive 
several novel and partially unexpected results. In particular we prove that the star 
product (also known as Markov product) $S_{\delta_1}*S_{\delta_2}$ of two LSL copulas 
$S_{\delta_1},S_{\delta_2}$ is again a LSL copula, i.e., that the family 
$\mathcal{C}^{LSL}$ is closed with respect to the star product. Moreover, we show that 
translating the star product to the class of corresponding diagonals 
$\mathcal{D}^{LSL}$ allows to determine the limit of the sequence  $S_\delta, 
S_\delta*S_\delta, S_\delta*S_\delta*S_\delta,\ldots$ for every diagonal $\delta \in 
\mathcal{D}^{LSL}$. In fact, for every LSL copula $S_\delta$ the sequence 
$(S_\delta^{*n})_{n \in \mathbb{N}}$ converges to some LSL copula 
$S_{\overline{\delta}}$, the limit $S_{\overline{\delta}}$ is idempotent, and the class 
of all idempotent LSL copulas allows for a simple characterization.\\
Complementing these results we then focus on concordance of LSL copulas. After deriving 
simple formulas for Kendall's $\tau$ and Spearman's $\rho$ we study the exact region 
$\Omega^{LSL}$ determined by these two concordance measures of all elements in 
$\mathcal{C}^{LSL}$, derive a sharp lower bound and finally show that $\Omega^{LSL}$ is 
convex and compact.
\end{abstract}

\keywords{Copula, Lower semilinear copula, Markov product, concordance, Markov kernel}

\section{Introduction}

    Considering that the class of bivariate copulas is quite diverse (in the sense of containing both, very regular/smooth 
    elements as well as distributions with fractal support, see \cite{TruFS}) it seems natural to look for 
    subclasses which, on the one hand, are handy and well understood and, on the other hand, are 
    sufficiently large to be relevant for applications. 
    Extreme Value copulas as well as Archimedean copulas are standard classes fulfilling these properties. 
    Both of them are characterized via a univariate function or, equivalently a probability measure on $[0,1]$ or $[0,\infty)$
    respectively, see, e.g., \cite{KTFSTW} and the references therein. \\	
	Asking for linearity along some segments of the unit square (and the resul\-ting simple analytic forms)
	in 2008 Durante et al. (see \cite{DuranteSLC}) introduced and analyzed 
	the family of so-called bivariate lower (and upper) semilinear co\-pulas. 
	The authors provided (among various other results) a nice stochastic interpretation and 
	characterized lower semilinear copulas (LSL copulas, for short) 
	in terms of another class of univariate functions: the class $\mathcal{D}^{LSL}$ of (copula) diagonals 
	with some additional growth conditions, defined by
	\begin{align}\label{DLSL}
	\DSL := \left\lbrace \delta \in \mathcal{D}: \varphi_\delta 
 	\text{ non-decreasing}, \eta_\delta \text{ non-increasing}
 	\right\rbrace.
    \end{align}
 	Thereby $\mathcal{D}$ denotes the family of all diagonals of bivariate copulas and the functions
 	$\varphi_\delta,\eta_\delta:(0,1] \rightarrow [0,\infty)$ are given by
 	$$
 	\varphi_\delta(x):= \frac{\delta(x)}{x},\quad \eta_\delta(x):= \frac{\delta(x)}{x^2}.
 	$$ 
 	For every fixed $\delta \in \DSL$ the corresponding LSL copula $S_\delta$ is then given by  
    \begin{align}\label{semcopula1}
 	 S_\delta(x,y) := \begin{cases}
 	 					y \, \frac{\delta(x)}{x} & \text{if } y\leq x,\\
 	 					x \, \frac{\delta(y)}{y} & \text{otherwise.}
 	 					\end{cases}
 	 \end{align}	
 	Obviously $S_\delta$ is symmetric, i.e., we have $S_\delta(x,y)=S_\delta(y,x)=:S_\delta^t(x,y)$ for all $x,y \in [0,1]$.  
    Letting $\CSL$ denote the family of all bivariate LSL copula it is straightforward to verify that
    $\CSL$ is convex and that $(\CSL,d_\infty)$ is compact. As a consequence, in \cite{ExtremeSemilin} Durante et al.
    revisited $\CSL$ and provided a nice cha\-rac\-terization of extreme points (in the Krein-Milman sense) of $\CSL$.\\ 	
 	Multivariate extensions of semilinear copulas were studied by Arias-García et al. in \cite{Multivar}
 	and further analyzed and provided with a probabilistic interpretation by Sloot and Scherer in \cite{SlootDiss}. 
	In the latter paper the authors in particular showed that in the multivariate setting upper 
	semilinear copulas are quite pathological in the sense that they concentrate their mass on finitely many 
	hyperplanes (and hence are singular).
	
	Here we revisit the class $\CSL$ of bivariate LSL copulas and show that, although the class is well studied 
	and easy to handle, it possesses additional properties that are at least partially surprising. 
    The latter in particular applies to the behavior of LSL copulas in connection with the so-called star product 
    (also known as Markov product) of copulas. The star product was introduced by Darsow et el. in 1992 (see \cite{DarsowCMP}) 
    and has since been studied in numerous papers. In 1996 Olsen et al. (see \cite{Ol}) 
    showed that the family $\mathcal{C}$
    of all bivariate copulas with the star product as binary operation and the space $(\mathcal{M}, \circ)$ of Markov 
    operators with the composition as binary operation are isomorphic - a result implying that
    studying the star product of copulas can as well be done by studying the corresponding Markov operators.
    Another rational, why the name Markov product is perhaps more adequate than the name star product was provided in 
    \cite{TruFS}, where the authors showed that $A*B$ just corresponds to the standard composition of the Markov kernels
    $K_A$ and $K_B$ (transition probabilities) well known in the context of Markov chains. \\    
    Apart from the family of completely dependent copulas and the family of checkerboard copulas (see, e.g., 
    \cite{KTFSTW} and the references therein) only fully parametric classes (like the Fr\'echet class, see \cite{NelsenCopulas},
    and Gauss copulas, see \cite{FuchsMO})
    are known to be closed under the star product. Considering the lower Fr\'echet Hoeffding bound $W$ we have 
    $W*W=M$, so although $W$ is an Archimedian $W*W=M$ is not. In other words: the family of Archimedean copulas is 
    not closed w.r.t. the star product. For the class of Extreme-Value copulas more tedious calculations 
    yield the same result, the class is not closed w.r.t. the star product either. \\
    It is therefore quite surprising that the star product $S_{\delta_1}*S_{\delta_2}$ of two 
    LSL copulas $S_{\delta_1}, S_{\delta_2} \in \CSL$ is again a LSL copula.     
    Building upon this fact allows to translate the star product to the class $\DSL$ of diagonals and to study
    the limit behavior of iterates of the star product $S_\delta^{*n}=S_\delta * S_\delta* \cdots * S_\delta$
    directly in terms of the limit behavior of the sequence of corresponding diagonals $(\delta^{*n})_{n \in \mathbb{N}}$.
    As one of the main results of this contribution 
    we will show that for every $\delta \in \DSL$ the sequence $(\delta^{*n})_{n \in \mathbb{N}}$
    converges uniformly to some $\overline{\delta} \in \DSL$ and that $\overline{\delta}$ is a fixed point of the star
    product in the sense that $\overline{\delta}*\overline{\delta}=\overline{\delta} $ holds. 
    Translating back to the class $\CSL$: for every $S_\delta \in \CSL$ there exists some $S_{\overline{\delta}} \in \CSL$
    such that 
    $$
    \lim_{n \rightarrow \infty} d_\infty(S_\delta^{*n},S_{\overline{\delta}})=0
    $$ 
    holds, and the limit copula $S_{\overline{\delta}}$ is idempotent, i.e.,  
    $S_{\overline{\delta}}*S_{\overline{\delta}}=S_{\overline{\delta}}$ holds. 
    Roun\-ding off these findings we provide a simple characterization of 
    all idempotent LSL copulas. 
     
    In the second part of the paper we focus on concordance of LSL copulas and study in particular 
    Kendall's $\tau$ and Spearman's $\rho$. 
    After deriving simple expressions for both concordance measures we study the $\tau$-$\rho$-region 
    $\Omega^{LSL}$, defined by
    $$
    \Omega^{LSL}:=\left\{(\tau(S_\delta),\rho(S_\delta)): \, S_\delta \in \CSL \right\}.
    $$
    We conjecture that $\Omega^{LSL}$ coincides with the set  
    \begin{equation*}\label{ew:lower.upper}
    R:=\left\{(x,y) \in [0,1]^2: x \leq y \leq 1 - (1-x)^{\frac{3}{2}}  \right\},
    \end{equation*}
    we were, however, only able to prove the lower inequality and to show that it is sharp. Proving the upper
    inequality (arising from running numerous simulations) remains an open problem. 
    Finally, despite not knowing the exact upper bound we show that $\Omega^{LSL}$ is compact and convex.  \\  
    	
	The rest of this paper is organized as follows:
	Section 2 gathers some notations and preliminaries and recalls some facts about LSL copulas mainly going back 
	to \cite{DuranteSLC}. Section 3 derives the Markov kernels of LSL copulas and considers two parametric 
	classes which will prove important in the sequel. 
	Section 4 starts with proving the fact that the star product of two LSL copulas is again a LSL copula and 
	then derives the afore-mentioned results on the limit of star product iterates of LSL copulas and their (idempotent)
	limits. Finally, Section 5 is devoted to studying Kendall's $\tau$ and Spearman's $\rho$ and their interplay in the 
	family $\CSL$. Several examples and graphics illustrate the studied procedures and some underlying ideas.
	

\section{Notation and preliminaries}
	
	\noindent For every metric space $(S,d)$ the Borel $\sigma$-field on $S$ will 
	be denoted by $\mathcal{B}(S)$. The two-dimensional Lebesgue measure on $\mathcal{B}([0,1]^2)$ will be denoted by $\lambda_2$, 
	the one-dimensional Lebesgue measure by $\lambda$.
	
    In the sequel we will write $\C$ for the family of all bivariate copulas, $\Pi$ denotes the product copula,
    $M$ the minimum copula and $W$ the lower Fr\'echet Hoeffding bound. $C^t$ will denote the transpose of $C$, i.e., 
    the copula fulfilling $C^t(x,y)=C(y,x)$. 	
	For every $C \in \C$ the corresponding doubly 
	stochastic measure will be denoted by $\mu_C$, i.e., $\mu_C([0,x]\times[0,y]) = C(x,y)$ 
	for all $x,y\in \unit$ (and $\mu_C$ is extended to full $\borelftwo$ in the standard measure-theoretic way).	
	The standard uniform metric $d_\infty$ on $\C$ is defined by
    \begin{align}
    	d_\infty(A,B) := \max\limits_{x,y \in [0,1]}\lvert A(x,y)-B(x,y)\rvert.
    \end{align}
    It is well known that the metric space $(\C,d_\infty)$ is compact and that pointwise
    and uniform convergence of a sequence of copulas $(C_n)_{n \in \N}$ are equivalent.
    For more background on copulas and doubly stochastic measures we refer to the textbooks
	\cite{DuSe, NelsenCopulas}.	\\
	The Lebesgue decomposition (see \cite{Els,rudin}) of a doubly stochastic measure $\mu_C$ with respect 
    to $\lambda_2$ will be denoted by $$ \mu_C = \mu_C^{\ll} + \mu_C^{\perp},$$
    where $\mu_C^{\ll}$ denotes the absolutely continuous and $\mu_C^{\perp}$ the 
    singular component of $\mu_C$. 
    We will write $sing(C)=\mu_C^{\perp}(\unit^2) \in [0,1]$ for the total mass of the singular 
    component of $C$ and set $abs(C):=1-sing(C)$. 
   
	A mapping $K:\unit \times \borelf \to \unit$ will be called a Markov kernel if 
    $x \mapsto K(x,B)$ is measurable for every fixed $B\in \borelf$ and 
    $B \mapsto K(x,B)$ is a probability measure on $\borelf$ for every fixed $x \in \unit$.
    It is well known that for every copula $C \in \mathcal{C}$ there exists a Markov kernel 
    $K_C: \unit \times \borelf \to \unit$ fulfilling 
    \begin{align*}
    	\int_E K_C(x,F) d\lambda(x) = \mu_C(E \times F)
    \end{align*}
    for all $E,F \in \borelf$ and that this Markov kernel is unique for $\lambda$-almost 
    every $x \in [0,1]$. Vice versa, every Markov kernel $K:\unit \times \borelf \to \unit$  
    having $\lambda$ as invariant distribution, i.e., fulfilling 
    \begin{align*}
    	\int_{[0,1]} K(x,F) d\lambda(x) = \lambda(F)
    \end{align*}   
    for every $F \in \borelf$, can be shown to be the Markov kernel of a unique copula $C$, which then fulfills
    $$
    C(x,y)=\int_{[0,x]} K(t,[0,y]) d\lambda(t)
    $$    
    for all $x,y \in [0,1]$. Notice that for fixed $y \in [0,1]$, $\lambda$-almost every $x \in [0,1]$ is 
    a Lebesgue point (see \cite{rudin}) of the mapping $x \mapsto K(x,[0,y])$, so the identity 
    \begin{equation}
     \frac{\partial C(x,y)}{\partial x}=K(x,[0,y])
    \end{equation}     
    holds (for such $x$).         
    For more background on Markov kernels and disintegration we refer to \cite{Ka}, 
    for more results on the interplay 
    between Markov kernels and copulas, e.g., to \cite{KTFSTW, Trup6}.\\
    Following \cite{DuranteSLC} $C \in \CSL$ is called lower semilinear (LSL) copula, if for every 
	$x \in (0,1]$ the mappings $t \mapsto h_x(t) := C(t,x)$ and $ t \mapsto v_x(t) := C(x,t)$ are li\-near on $[0,x]$. 
    As already mentioned in the introduction, $\mathcal{D}$ denotes the family of all copula diagonals, i.e.,     
    we have that $\delta:[0,1]\to[0,1]$ is an element of $\mathcal{D}$ if, and only if
    it fulfills the following three conditions: 
  	\begin{itemize}
  		\item[(i)] $\delta(u) \leq u$ for all $u \in [0,1]$ and $\delta(1)=1$,
  		\item[(ii)] $\delta$ is non-decreasing,
  		\item[(iii)] $\delta$ is $2$-Lipschitz, i.e., 
  		$|\delta(v)-\delta(u)| \leq 2|v-u|$ holds for all $u,v \in [0,1]$.
  	\end{itemize}
  	As already mentioned in the introduction, according to \cite{DuranteSLC} LSL copulas correspond to special diagonals. 
  	In fact, letting $\DSL$ be defined according to equation (\ref{DLSL}), the following result holds
  	for an arbitrary diagonal $\delta \in \mathcal{D}$ and $S_\delta$ defined according to 
  	equation (\ref{semcopula1}) (with the convention $\frac{0}{0} := 0$):
  	 $S_\delta$ is a copula if, and only if $\delta \in \DSL$. \\
  	Again following \cite{DuranteSLC} and using the fact that Lipschitz continuous functions are differentiable 
  	$\lambda$-almost everywhere, it is straightforward
 	to check that for fixed $\delta \in \mathcal{D}$ we have $\delta \in \DSL$ if, and only if the following inequality 
 	holds $\lambda$-almost everywhere:
 	\begin{align}\label{abschaetzen}
 		\delta(x) \leq x \delta'(x) \leq 2\delta(x)
	\end{align}
	This immediately yields that every $\delta \in \DSL$ fulfills $$x^2= \delta_\Pi(x) \leq \delta(x) \leq \delta_M(x) = x$$
	for every $x \in [0,1]$, implying that $\Pi \leq C_\delta \leq M$ holds for every $S_\delta \in \CSL$.  
	
	In what follows, the so-called star product (a.k.a. Markov product) $A*B$ of copulas $A,B \in \mathcal{C}$
	will play a prominent role.   
	Letting $\partial_i$ denote the partial derivative with respect to the $i$-th coordinate, 	$A \ast B $ is defined by
 	\begin{align}\label{starprod}
 		(A\ast B)(x,y) := \int\limits_{[0,1]} \partial_2 A(x,s)\cdot 
 						\partial_1 B(s,y) \hspace*{1mm} d\lambda(s) 
 	\end{align}
 	for all $x,y \in [0,1]$. 
 	It is well known (see \cite{NelsenCopulas}) that $A \ast B $ is a copula, that the star product 
 	is in general not commutative, i.e., $A*B \neq B*A$ can hold, that $\Pi$ is the null- and $M$ the unit element in 
 	$(\mathcal{C},*)$, i.e., $A*\Pi=\Pi*A=\Pi$ and $A*M=M*A=A$ for every $A \in \mathcal{C}$.
 	A copula $A$ is called idempotent, if $A*A=A$ holds. 
 	Using Markov kernels it is straightforward to verify that the following identity holds:
 	\begin{align}\label{starprod2}
 		(A\ast B)(x,y) := \int\limits_{[0,1]} K_{A^t}(s,[0,x]) K_{B}(s,[0,y]) 
 			\hspace*{1mm} d\lambda(s) 
 	\end{align}
 	Moreover, according to \cite{TruFS}, using disintegration it can be shown that a (version of the) 
 	Markov kernel of $A*B$
 	is given by the standard composition of the Markov kernels of $A$ and $B$, a concept well-known 
 	in the context of Markov chains in discrete time. In other words, $K_A \circ K_B$, defined by
 	\begin{align}\label{starprod3}
 		(K_A \circ K_B)(x,F) = \int\limits_{[0,1]} K_{B}(s,F]) K_{A}(x,dy) 
 			\hspace*{1mm} d\lambda(s)
 	\end{align}
 	for every $x \in [0,1]$ and $F \in \mathcal{B}[0,1]$ is a (version of the) Markov kernel of $A*B$. 
 	For more background on the star product we refer to \cite{DarsowCMP,NelsenCopulas,TruFS,trutp10} and the references therein.

  \section{Markov kernels of LSL copulas and two important examples}\label{MarkovSection}
  	\noindent We start with recalling the form of the Markov kernel of LSL copulas. Letting 
  	$\delta \in \DSL$ be arbitrary but fixed, then there exists some set $\Lambda \in \mathcal{B}(\unit)$ with
 	$\lambda(\Lambda) = 1$ and some Borel measurable function $\hat{w}_\delta:[0,1] \rightarrow [0,2]$ 
 	such that for every $x\in \Lambda$ we have $\delta'(x) = \hat{w}_\delta(x)$. 
    Defining $w_\delta :\unit \to [0,2]$ (again using the convention $\frac{0}{0}:=0$) by
 	$$ w_\delta(x) = \hat{w}{_\delta}(x) \mathbf{1}_{\Lambda}(x) +  
 	\frac{\delta(x)}{x}\mathbf{1}_{\Lambda^c}(x)$$ 	
 	we have that $w_\delta$ is measurable, that $w_\delta=\delta'$
        holds $\lambda$-almost everywhere, and using 
 	inequality (\ref{abschaetzen}) that $w_\delta(x) \geq \frac{\delta(x)}{x}$
        for every $x \in [0,1]$.
 	We will refer to $\hat{w}_\delta$ and $w_\delta$ as measurable versions of the derivative of $\delta$. 
 	The following result has already been derived in \cite{SMPSconf}, we just recall it for the sake of completeness 
 	and since it will be used in the sequel.
       	
   \begin{Theorem}[\cite{SMPSconf}]\label{kern}
 	Let $\Sd$ be a LSL copula for a given $\delta \in \DSL$ and let $w_\delta$ the measurable version of 
 	the derivative of $\delta$ as constructed above.
 	Then (a version of) the Markov Kernel $K_{\Sd}$ of $\Sd$ is given by
 	\begin{equation}\label{eqKern}
 		K_{\Sd}(x,[0,y]) = \begin{cases}
 						\frac{y}{x} \, w_\delta(x) - \frac{y}{x^2}\delta(x) & 
 							\text{ if } y < x,\\
 						\frac{1}{y} \,\delta(y) &  \text{ if } y \geq x.
 						\end{cases}
 	\end{equation}
 	\end{Theorem}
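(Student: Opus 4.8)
The plan is to show that the right-hand side of (\ref{eqKern}) defines a genuine Markov kernel and that it satisfies the disintegration identity characterizing $K_{S_\delta}$; the almost-everywhere uniqueness of Markov kernels then yields the claim. Throughout write $K(x,[0,y])$ for the candidate expression in (\ref{eqKern}).

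First I would fix $x \in (0,1]$ and check that $y \mapsto K(x,[0,y])$ is the distribution function of a probability measure on $\borelf$. On $[0,x)$ the expression is linear in $y$ with slope $\tfrac{x\,w_\delta(x)-\delta(x)}{x^2}$, which is nonnegative since $w_\delta(x)\ge \delta(x)/x$; on $[x,1]$ it equals $\delta(y)/y=\varphi_\delta(y)$, which is non-decreasing because $\delta \in \DSL$. At $y=x$ the left limit equals $w_\delta(x)-\delta(x)/x$ whereas the value equals $\delta(x)/x$, so there is an upward jump of size $\tfrac{2\delta(x)}{x}-w_\delta(x)\ge 0$ by the upper bound in (\ref{abschaetzen}); this jump is precisely the atom forced by the mass of $S_\delta$ on the diagonal. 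Together with $K(x,[0,0])=0$ and $K(x,[0,1])=\delta(1)=1$ this shows that $y\mapsto K(x,[0,y])$ is a non-decreasing, right-continuous function running from $0$ to $1$, hence a genuine distribution function; measurability of $x\mapsto K(x,[0,y])$ for fixed $y$ is immediate from that of $\delta$ and $w_\delta$, so $K$ is indeed a Markov kernel. The value $x=0$ only requires the convention $\tfrac{0}{0}:=0$ and is in any case $\lambda$-null.

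Next I would identify $K$ with the true kernel through the partial-derivative characterization recalled before (\ref{eqKern}). Fixing $y$ and differentiating $S_\delta(\cdot,y)$ in the two regions of (\ref{semcopula1}): for $x>y$ one has $S_\delta(x,y)=y\,\delta(x)/x$, whence $\partial_1 S_\delta(x,y)=\tfrac{y}{x}\delta'(x)-\tfrac{y}{x^2}\delta(x)$ wherever $\delta$ is differentiable, while for $x<y$ one has $S_\delta(x,y)=x\,\delta(y)/y$, whence $\partial_1 S_\delta(x,y)=\delta(y)/y$. Since $w_\delta=\delta'$ holds $\lambda$-almost everywhere, these derivatives coincide with $K(x,[0,y])$ for $\lambda$-almost every $x$. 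As $x\mapsto S_\delta(x,y)$ is Lipschitz and hence absolutely continuous, integration gives $S_\delta(a,b)=\int_{[0,a]}K(x,[0,b])\,d\lambda(x)$ for all $a,b$, which is the disintegration identity on the generating rectangles and thus, by a standard Dynkin argument, on all measurable rectangles.

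The two partial-derivative computations are entirely routine; the step demanding genuine care is the verification that the candidate really is a Markov kernel. The monotonicity on $[0,x)$, the nonnegativity of the atom at $y=x$, and the normalization at $y=1$ each rely on the two-sided growth estimate (\ref{abschaetzen}) characterizing $\DSL$, so the argument necessarily fails for diagonals outside $\DSL$ — consistent with $S_\delta$ being a copula exactly in that case.
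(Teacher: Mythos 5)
The paper does not actually prove Theorem \ref{kern}: it is imported verbatim from \cite{SMPSconf} ``for the sake of completeness'', so there is no in-paper argument to compare yours against. That said, your proof is correct and is the natural one: you verify that the candidate defines a genuine Markov kernel (monotone on $[0,x)$ by the lower bound $w_\delta(x)\ge\delta(x)/x$, monotone on $[x,1]$ since $\varphi_\delta$ is non-decreasing, non-negative atom at $y=x$ by the upper bound, normalization via $\delta(1)=1$), then match it with $\partial_1 S_\delta(\cdot,y)$ almost everywhere and integrate using Lipschitz continuity (absolute continuity) of $x\mapsto S_\delta(x,y)$ to get the disintegration identity on rectangles, after which a.e.-uniqueness of Markov kernels finishes the job. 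This is exactly the structure the authors hint at in the remark following the theorem (the only possible point mass of $K_{S_\delta}(x,\cdot)$ sits at $y=x$), and your observation that the jump $\tfrac{2\delta(x)}{x}-w_\delta(x)$ encodes the singular mass is consistent with Lemma \ref{singmass}. The single place deserving one extra sentence is your appeal to inequality (\ref{abschaetzen}) for the atom: that inequality is stated only $\lambda$-almost everywhere, whereas you need $w_\delta(x)\le 2\delta(x)/x$ for \emph{every} $x\in(0,1]$ to conclude that $K(x,\cdot)$ is a probability measure at every $x$. This does hold pointwise --- on $\Lambda$ because monotonicity of $\eta_\delta$ forces $x\delta'(x)\le 2\delta(x)$ at every point of differentiability with $x>0$, and on $\Lambda^c$ because there $w_\delta(x)=\delta(x)/x$ by construction --- but the justification is the construction of $w_\delta$ together with the monotonicity of $\eta_\delta$, not the a.e. statement (\ref{abschaetzen}) itself. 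With that small clarification your argument is complete.
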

    Notice that for fixed $x \in (0,1)$ the conditional distribution function 
    $y \mapsto F^\delta_x(y):=K_{S_\delta}(x,[0,y])$ is Lipschitz continuous on $[0,x)$ and on $[x,1]$. 
    As a direct consequence, the only possible discontinuity point of $F^\delta_x$ is $y=x$, i.e., 
    the only point mass the measure $K_{S_\delta}(x,\cdot)$ can have is at $y=x$. Using this simple observation 
    yields the following result going back to \cite{SMPSconf}.
 	\begin{Lemma}[\cite{SMPSconf}]\label{singmass} For every LSL copula $\Sd$ the singular mass is given by
 	\begin{align}
 	sing(S_\delta)=2\int\limits_{[0,1]} \frac{\delta(x)}{x} \hspace*{1mm}
 			d\lambda(x)-1
 	\end{align}
 	\end{Lemma}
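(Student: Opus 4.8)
The plan is to exploit the disintegration $\mu_{S_\delta}(E\times F)=\int_E K_{S_\delta}(x,F)\,d\lambda(x)$ together with the observation recorded in the preceding remark: for $\lambda$-almost every fixed $x\in(0,1)$ the conditional distribution function $y\mapsto F_x^\delta(y)=K_{S_\delta}(x,[0,y])$ is Lipschitz continuous on $[0,x)$ and on $[x,1]$, hence absolutely continuous on each of these two intervals, so that the measure $K_{S_\delta}(x,\cdot)$ decomposes into a part that is absolutely continuous with respect to $\lambda$ plus a single atom located at $y=x$. First I would therefore isolate this atom: writing $J(x)$ for its mass we have $J(x)=F_x^\delta(x)-\lim_{y\uparrow x}F_x^\delta(y)$.

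Next I would compute $J(x)$ directly from the two branches of the kernel in equation (\ref{eqKern}). Evaluating the lower branch ($y<x$) in the limit $y\uparrow x$ gives $\lim_{y\uparrow x}F_x^\delta(y)=w_\delta(x)-\frac{\delta(x)}{x}$, whereas the upper branch ($y\ge x$) gives $F_x^\delta(x)=\frac{\delta(x)}{x}$. Subtracting yields
$$
J(x)=\frac{2\delta(x)}{x}-w_\delta(x).
$$
This is a legitimate (nonnegative) mass, since the bounds $\frac{\delta(x)}{x}\le w_\delta(x)\le \frac{2\delta(x)}{x}$ — the first recorded before Theorem \ref{kern}, the second following from inequality (\ref{abschaetzen}) together with $w_\delta=\delta'$ $\lambda$-a.e. — force $0\le J(x)\le \frac{\delta(x)}{x}$; in particular $J$ is measurable and bounded, hence integrable.

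The final step is to identify $\int_{[0,1]}J(x)\,d\lambda(x)$ with $sing(S_\delta)$ and to evaluate the integral. Assembling the fiberwise decompositions over $x$, the absolutely-continuous-on-each-line parts integrate (against $d\lambda(x)$) to a measure possessing a $\lambda_2$-density, while the atoms assemble to a measure carried by the diagonal $\{(x,x):x\in[0,1]\}$; since the diagonal is $\lambda_2$-null, the latter is exactly the singular component $\mu_{S_\delta}^{\perp}$ and therefore $sing(S_\delta)=\int_{[0,1]}J(x)\,d\lambda(x)$. Plugging in the formula for $J$ and using $\int_{[0,1]}w_\delta\,d\lambda=\int_{[0,1]}\delta'\,d\lambda=\delta(1)-\delta(0)=1$ (the fundamental theorem of calculus for the Lipschitz function $\delta$, with $\delta(0)=0$) then gives
$$
sing(S_\delta)=\int_{[0,1]}\Big(\tfrac{2\delta(x)}{x}-w_\delta(x)\Big)\,d\lambda(x)=2\int_{[0,1]}\frac{\delta(x)}{x}\,d\lambda(x)-1,
$$
as claimed. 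The main obstacle is the measure-theoretic bookkeeping in this last step: one must justify rigorously that integrating the pointwise Lebesgue decompositions of the kernels $K_{S_\delta}(x,\cdot)$ reproduces the Lebesgue decomposition of $\mu_{S_\delta}$ with respect to $\lambda_2$ — i.e.\ that the per-fiber absolutely continuous parts cannot secretly contribute singular mass and that the diagonal atoms account for all of it. Everything else is a direct computation.
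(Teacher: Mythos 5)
Your argument is correct and is exactly the route the paper intends: it does not spell out a proof (the result is cited from \cite{SMPSconf}), but the preceding remark about $F_x^\delta$ being Lipschitz on $[0,x)$ and $[x,1]$ with its only possible atom at $y=x$ is precisely your fiberwise decomposition, and your jump $\frac{2\delta(x)}{x}-w_\delta(x)$ integrates to the stated formula via $\int_{[0,1]}w_\delta\,d\lambda=1$. The measure-theoretic bookkeeping you flag is unproblematic here since the fiberwise absolutely continuous parts have an explicit jointly measurable density, so Tonelli plus uniqueness of the Lebesgue decomposition identifies the diagonal atoms with $\mu_{S_\delta}^{\perp}$.
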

 	 	
 	In the following example we introduce two important types of diagonals in $\DSL$, determine explicit 
 	expressions for the corresponding LSL copula and its Markov kernel. These diagonals will play a prominent role in 
 	Section 5 on concordance of LSL copulas.
 	\begin{example}\label{KerneAB}
 		For every $a \in \unit$ define the diagonals $l_a, u_a:\unit \to \unit$ by 
		\begin{align*}
			l_a(x) := 
			\begin{cases} 
				ax & \text{ if } x \leq a\\
				x^2 & \text{ if } x > a\\
			\end{cases} \qquad
			u_a(x) := 
			\begin{cases} 
				\frac{x^2}{a} & \text{ if } x \leq a\\
				x & \text{ if } x > a.\\
			\end{cases} \quad
		\end{align*}
		Figure \ref{fig:diagonals} depicts both diagonals for the case $a=\frac{1}{2}$.
	\begin{figure}[ht!]
 		\centering
 		\includegraphics[width=\textwidth]{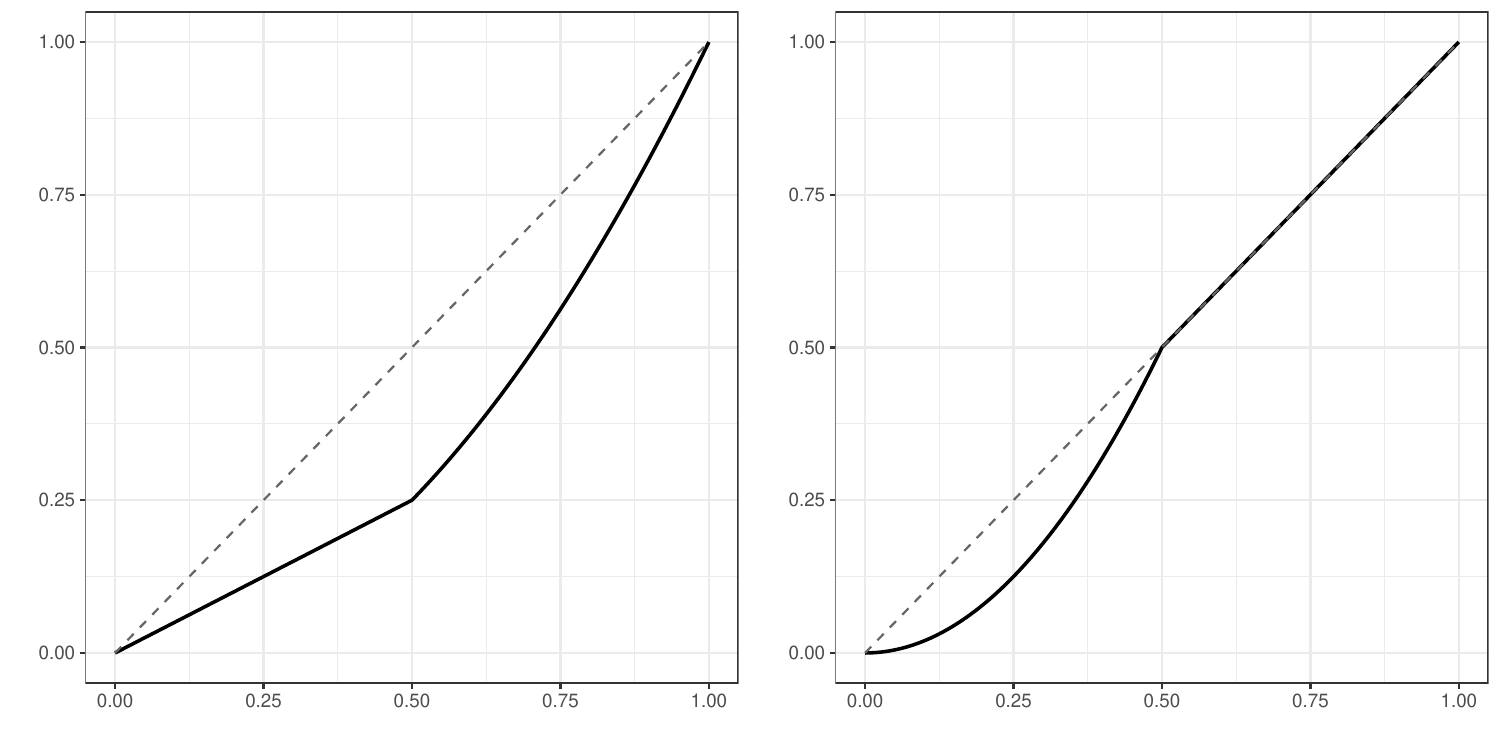}
		\caption{The diagonals $l_a,u_a$ for the case $a=\frac{1}{2}$ as considered in Example \ref{KerneAB}.}
 		\label{fig:diagonals}
 	\end{figure}
		It is straightforward to verify that $l_a,u_a \in \DSL$ for every $a \in [0,1]$ and that	
		the LSL copulas $S_{l_a},S_{u_a}$ induced by 	$l_a, u_a$ are given by
		\begin{align*}
		    S_{l_a}(x,y) &:= 
			\begin{cases} 
				\min\lbrace x,y \rbrace\cdot a & \text{ if } \max\lbrace x,y \rbrace 
													\leq a\\
				xy & \text{ if } \max\lbrace x,y \rbrace > a\\
			\end{cases} \\
			S_{u_a}(x,y) &:= 
			\begin{cases} 
				\frac{xy}{a} & \text{ if } \max\lbrace x,y \rbrace \leq a\\
				\min\lbrace x,y \rbrace & \text{ if }  \max\lbrace x,y \rbrace > a.\\
			\end{cases}
		\end{align*}
		Calculating the derivatives of $l_a, u_a$ and applying Theorem 
		\ref{kern} yields 
		\begin{align*}
		    K_{S_{l_a}}(x,[0,y]) &:= 
			\begin{cases} 
				0 & \text{ if } y < x \leq a \\
				a & \text{ if } x \leq y \leq a\\
				y & \text{ if } \max\lbrace x,y \rbrace > a\\
			\end{cases}\\
			K_{S_{u_a}}(x,[0,y]) &:= 
			\begin{cases} 
				0 & \text{ if } y < x, a < x \\
				\frac{y}{a} & \text{ if } \max\lbrace x,y \rbrace \leq a\\
				1 & \text{ if }  y \geq x, \,y > a.\\
			\end{cases}
		\end{align*}
     For the singular masses we obtain $sing(S_{l_a})=a^2$ as well as $sing(S_{u_a})=1-a$. 		
	The Markov kernels $K_{S_{l_a}}, K_{S_{u_a}}$ for $a= \frac{1}{2}$ are depicted in 
	 Figure \ref{fig:kernels}. 
	\begin{figure}[ht!]
 		\centering
 		\includegraphics[width=\textwidth]{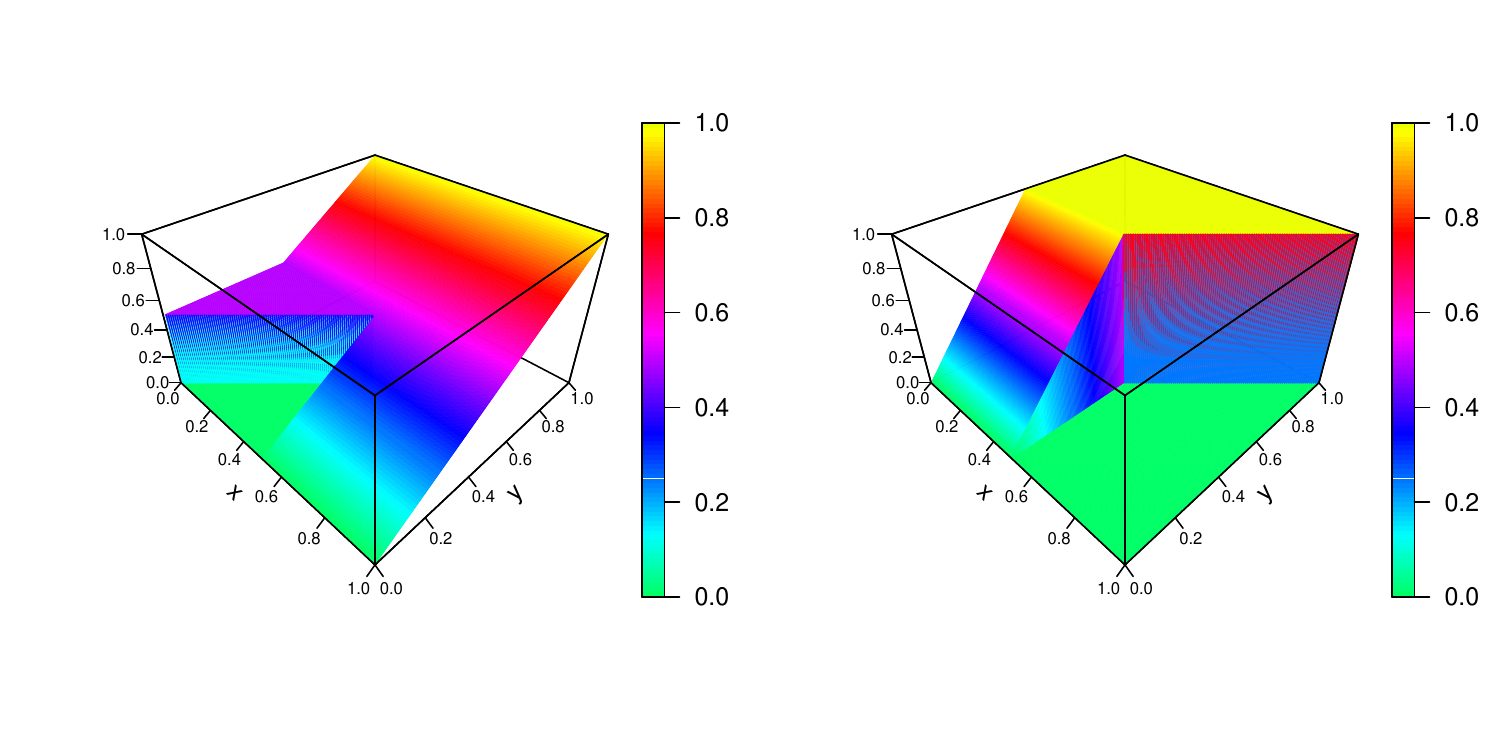}
		\caption{Surface plots of the functions $(x,y)\mapsto K_{S_{l_{a}}}(x,[0,y])$
 		and $(x,y)\mapsto K_{S_{u_{a}}}(x,[0,y])$ as considered in Example \ref{KerneAB} for the case $a=\frac{1}{2}$.}
 		\label{fig:kernels}
 	\end{figure} 
 	\end{example}
 	
	We complete this short section with some monotonicity properties of LSL copulas.
 	Recall (see \cite{NelsenCopulas} and \cite{Tp2}) that a copula $C \in \mathcal{C}$ is said to be
 	\begin{itemize}
 		\item positively quadrant dependent (PQD) if $C(x,y) \geq \Pi(x,y)$ holds for 
 				all $(x,y)\in [0,1]^2$.
 		\item left tail decreasing (LTD) if, for any $y\in [0,1]$, the mapping 
 				$(0,1)\to \mathbb{R}$ given by $x \mapsto \frac{C(x,y)}{x}$ is 
 				non-increasing.
 		\item stochastically increasing (SI) if, for (a version of) the Markov 
 				kernel $K_C$ and any $y\in (0,1)$ the mapping 
 				$x \mapsto K_C(x,[0,y])$ is non-increasing.
 	\end{itemize}
	We already know that every $C \in \CSL$ fulfills $\Pi \leq S_\delta \leq M$, hence LSL copulas are PQD
	(also see \cite{DuranteSLC}). \\
	Concerning LTD suppose that $\delta \in \DSL$ and $y \in (0,1)$ are fixed.
 	Then using inequality (\ref{abschaetzen}) obviously the mapping $$x \mapsto 
 	\frac{S_\delta(x,y)}{x} = \begin{cases} 
	y \,\frac{\delta(x)}{x^2} & y \leq x\\
	\frac{\delta(y)}{y} & y \geq x 	
 	\end{cases}$$ is non-increasing, so LSL copulas are LTD.\\
 	Finally, the following simple example shows that LSL copulas are not necessarily SI.
 	\begin{figure}[ht!]
 		\centering
 		\includegraphics[scale=0.46]{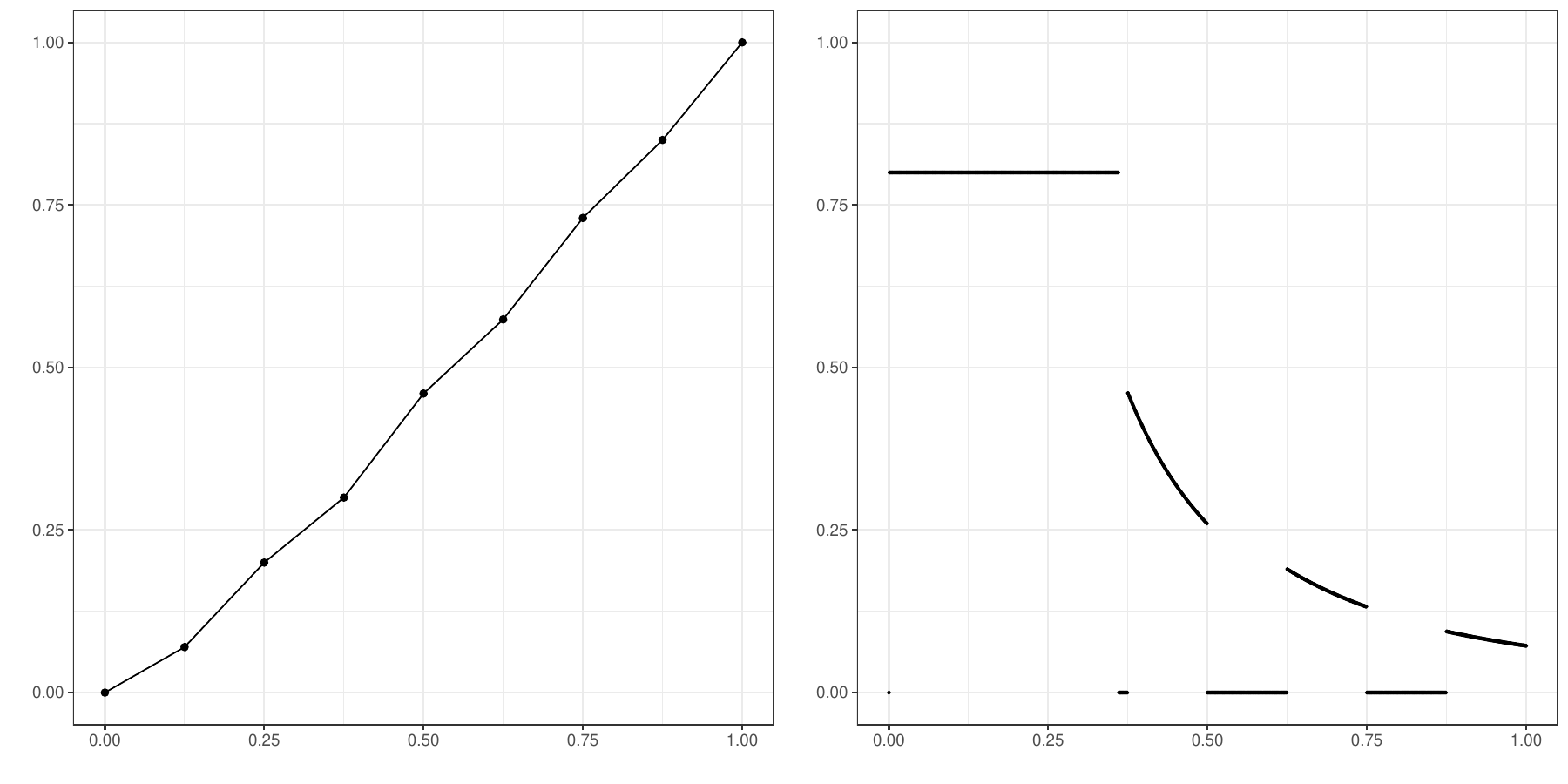}
 		\caption{The diagonal $\delta \in \DSL$ considered in Example \ref{ExHoles}
 		(left panel) and the mapping $x \mapsto K_{S_\delta}(x,[0,y])$ for $y=0.36$
 		(right panel).}
 		\label{NotSI} \vspace{0.1cm} 
 	\end{figure}

 	\begin{figure}[ht!]
 		\centering
 		\includegraphics[scale=0.63]{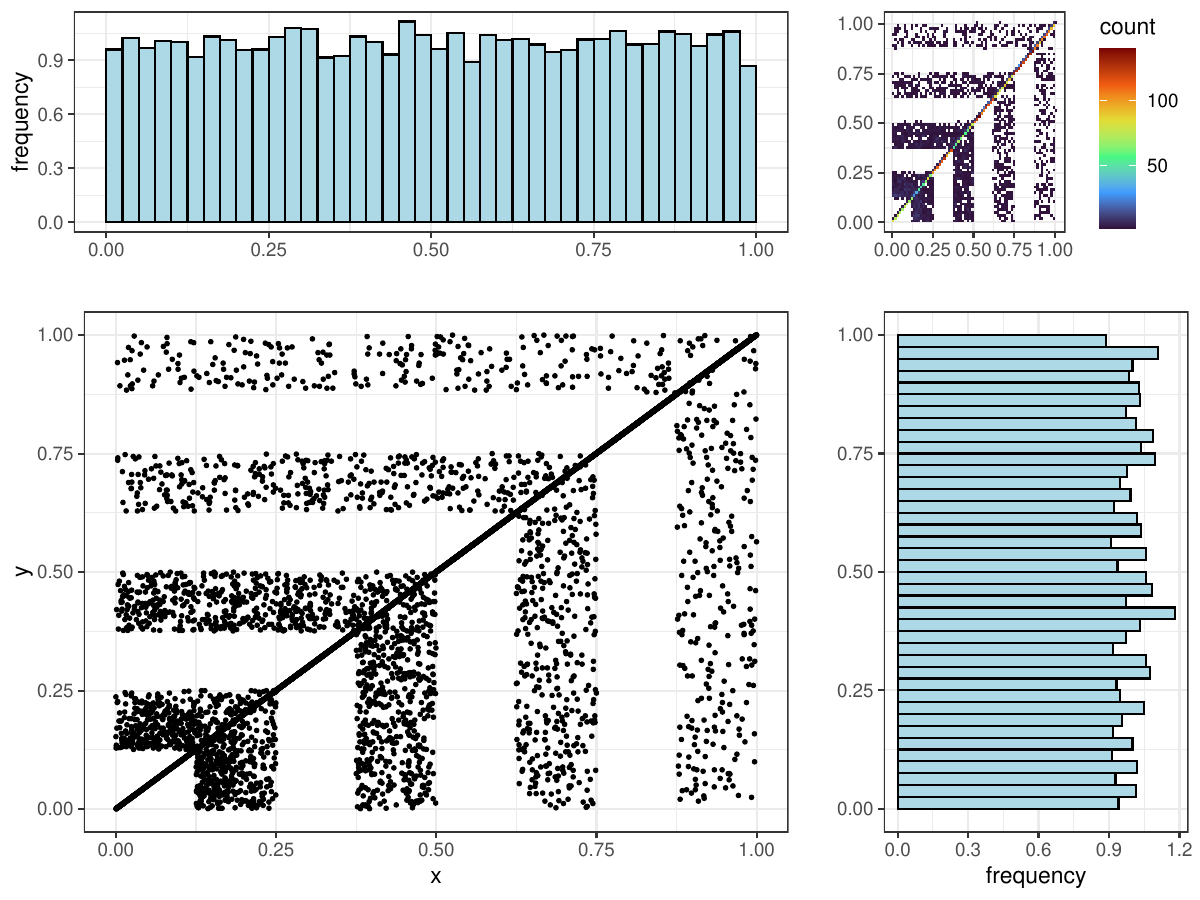}
 		\caption{Sample of size $n=10.000$ of the LSL copula $\Sd$ considered 
 		in Example \ref{ExHoles} (lower left panel); two-dimensional 
 		histogram (upper right panel) and marginal histograms 
 		(upper left and lower right panel).}
 		\label{sampleSI}
 	\end{figure}
 	
 	\begin{example}\label{ExHoles}
 		Consider the points $(0,0), (\frac{1}{8},\frac{7}{10}), (\frac{1}{4},\frac{1}{5}),
 		(\frac{3}{8},\frac{3}{10}), (\frac{1}{2},\frac{23}{50}), 
 		(\frac{5}{8},\frac{287}{500})$, $(\frac{3}{4},\frac{73}{100}),
 		(\frac{7}{8},\frac{17}{20}), (1,1)$ and let
		$\delta: \unit \to \unit$ denote the linear interpolation of these points.
		It is straightforward to verify that $\delta \in \DSL$ and that the mapping $x \mapsto 
		\frac{\delta(x)}{x}$ is constant on every second of the intervals formed by the $x$-coordinates
		of the afore-mentioned points. Figure  \ref{NotSI} depicts the diagonal $\delta$ and the mapping  
		$x \mapsto K_{S_\delta}(x,[0,y])$ for $y=0.36$, Figure \ref{sampleSI} a sample of the corresponding LSL copula $\Sd$.
		Obviously we can find $y\in (0,1)$ such that mapping $x \mapsto K_{\Sd}(x,[0,y])$ is not
		non-increasing. 
 	\end{example}
 	

	\section{The star product of LSL copulas}\label{starproduct}
	Contrary to Archimedean and Extreme Value copulas (two families also characterized in terms of 
	univariate functions) the familie $\CSL$ of LSL co\-pulas is closed with respect to the star product - the following 
	theorem holds (to keep notation simple we avoid again working with versions of the derivatives since 
	the integral ignores sets of $\lambda$-measure $0$):
 	\begin{Theorem}\label{sternprodukt}
	Suppose that $\delta_1,\delta_2 \in \DSL$. Then the star product $S_{\delta_1}\ast S_{\delta_2}$ is given by  
 	\begin{align}
 		(S_{\delta_1}\ast S_{\delta_2})(x,y) = \left\lbrace
 		\begin{array}{c c}
 		\displaystyle\frac{x}{y^2}\delta_1(y)\delta_2(y) + xy\displaystyle\int\limits_{[y,1]} 
 		 \left(\tfrac{\delta_1(u)}{ u}\right)'
 		\left(\tfrac{\delta_2(u)}{ u}\right)' \hspace*{1mm} d\lambda(u) & \text{if } 
 		y > x,\\
 		\displaystyle\frac{y}{x^2}\delta_1(x)\delta_2(x) + xy \displaystyle\int\limits_{[x,1]} 
 		 \left(\tfrac{\delta_1(u)}{ u}\right)'
 		\left(\tfrac{\delta_2(u)}{ u}\right)' \hspace*{1mm} d\lambda(u) & \text{if } 
 		y \leq x.\\
 		\end{array}\right.
 	\end{align}
 	In particular, $S_{\delta_1}\ast S_{\delta_2}$ is a LSL copula too, i.e. $S_{\delta_1}\ast S_{\delta_2} \in \CSL$.
 	\end{Theorem}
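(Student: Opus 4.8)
The plan is to compute the star product directly from its Markov-kernel representation (\ref{starprod2}) together with the explicit kernel of Theorem \ref{kern}. Since every LSL copula is symmetric we have $K_{S_{\delta_1}^t}=K_{S_{\delta_1}}$, so
$$
(S_{\delta_1}\ast S_{\delta_2})(x,y)=\int_{[0,1]} K_{S_{\delta_1}}(s,[0,x])\,K_{S_{\delta_2}}(s,[0,y])\,d\lambda(s).
$$
Because the claimed expression is symmetric in $x$ and $y$ it suffices to treat the case $x<y$. First I would split the domain of integration into the three intervals $[0,x]$, $(x,y]$ and $(y,1]$, on each of which both kernel factors collapse to a single branch of (\ref{eqKern}): on $[0,x]$ both use the lower branch (yielding the constants $\delta_1(x)/x$ and $\delta_2(y)/y$), on $(x,y]$ the first factor switches to the upper branch while the second stays constant, and on $(y,1]$ both factors use the upper branch.

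The key simplification is the elementary a.e.\ identity
$$
\Big(\tfrac{\delta(s)}{s}\Big)'=\frac{w_\delta(s)}{s}-\frac{\delta(s)}{s^2},
$$
which lets me evaluate the contribution of $(x,y]$ by telescoping, since $\int_x^y(\delta_1(s)/s)'\,d\lambda(s)=\delta_1(y)/y-\delta_1(x)/x$, and rewrites the integrand on $(y,1]$ as $xy\,(\delta_1(u)/u)'(\delta_2(u)/u)'$. Adding the contributions of $[0,x]$ and $(x,y]$, the two terms involving $\delta_1(x)\delta_2(y)/y$ cancel, leaving exactly $\tfrac{x}{y^2}\delta_1(y)\delta_2(y)$ plus the integral over $(y,1]$, which is the asserted formula. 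All integrals are finite because $w_\delta$ is bounded by $2$, so no integrability issue arises.

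To conclude that the product is again LSL, set $\delta(x):=(S_{\delta_1}\ast S_{\delta_2})(x,x)$, i.e.
$$
\delta(x)=\frac{\delta_1(x)\delta_2(x)}{x}+x^2\int_{[x,1]}\Big(\tfrac{\delta_1(u)}{u}\Big)'\Big(\tfrac{\delta_2(u)}{u}\Big)'\,d\lambda(u).
$$
Dividing by $x$ and comparing with the formula for $y>x$ shows that $(S_{\delta_1}\ast S_{\delta_2})(x,y)=x\,\delta(y)/y$ for $y>x$, and symmetrically $y\,\delta(x)/x$ for $y\leq x$; that is, the star product is precisely of the LSL form (\ref{semcopula1}) with diagonal $\delta$. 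Since $S_{\delta_1}\ast S_{\delta_2}$ is a copula (the star product of copulas always is), its diagonal $\delta$ automatically belongs to $\mathcal{D}$, and the characterization from \cite{DuranteSLC} recalled above --- namely that $S_\delta$ is a copula if and only if $\delta\in\DSL$ --- then forces $\delta\in\DSL$. Hence $S_{\delta_1}\ast S_{\delta_2}=S_\delta\in\CSL$.

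The main obstacle I anticipate is purely bookkeeping: getting the three-way split and the branch choices of (\ref{eqKern}) right and verifying that the two $\delta_1(x)\delta_2(y)/y$ terms cancel; once the antiderivative identity is in hand the rest is routine. A secondary subtlety is the logical step at the end --- one must identify the candidate $\delta$ as the \emph{actual} diagonal of the (already known to be a) copula $S_{\delta_1}\ast S_{\delta_2}$ and then invoke the copula/diagonal characterization, rather than trying to verify the growth conditions (\ref{abschaetzen}) for $\delta$ directly.
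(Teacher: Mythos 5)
Your proposal follows essentially the same route as the paper: the kernel representation (\ref{starprod2}) with $K_{S_{\delta_1}^t}=K_{S_{\delta_1}}$ by symmetry, the three-way split of the integration domain at $x$ and $y$, the telescoping of $\int_x^y\bigl(\delta_1(s)/s\bigr)'\,d\lambda(s)$, and the cancellation of the $\delta_1(x)\delta_2(y)/y$ terms; the concluding step (reading off the diagonal and invoking the copula/diagonal characterization of \cite{DuranteSLC}) is a harmless variant of the paper's direct check that the sections $t\mapsto(S_{\delta_1}\ast S_{\delta_2})(t,x)$ and $t\mapsto(S_{\delta_1}\ast S_{\delta_2})(x,t)$ are linear on $[0,x]$.

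There is, however, one genuine logical gap: you dismiss the case $y\leq x$ on the grounds that the \emph{claimed} expression is symmetric in $x$ and $y$. Symmetry of the target formula does not give you symmetry of the object you are computing. Indeed, swapping $x$ and $y$ in $\int_{[0,1]}K_{S_{\delta_1}}(s,[0,x])K_{S_{\delta_2}}(s,[0,y])\,d\lambda(s)$ produces $(S_{\delta_2}\ast S_{\delta_1})(x,y)$, so the reduction you invoke is equivalent to commutativity of $S_{\delta_1}$ and $S_{\delta_2}$ under the star product --- which is not known at that point (the star product is not commutative in general, and commutativity here only becomes apparent \emph{after} both cases have been computed). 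The fix is cheap: for $y\leq x$ you split at $y$ and $x$ instead, where now it is the \emph{second} kernel factor that changes branch on $(y,x]$ while the first stays constant at $\delta_1(x)/x$; the telescoping then yields $\tfrac{y}{x^2}\delta_1(x)\delta_2(x)+xy\int_{[x,1]}\bigl(\delta_1(u)/u\bigr)'\bigl(\delta_2(u)/u\bigr)'\,d\lambda(u)$. This is exactly the second computation the paper carries out explicitly, and without it your argument only establishes the formula on the half of the square where $x<y$.
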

 	\begin{proof}
 	Suppose that $0 \leq x < y \leq 1$. Then using symmetry of LSL copulas and equation (\ref{starprod2}) we obtain
 	\begin{align*}
 		(S_{\delta_1}\ast S_{\delta_2})(x,y) &= \int\limits_{[0,x]} \tfrac{\delta_1(x)}{x}
 		\tfrac{\delta_2(y)}{y}\hspace*{1mm} d\lambda(t) 
 		+\int\limits_{[x,y]} \left(\tfrac{x}{t}w_{\delta_1}(t)-\tfrac{x}{t^2}\delta_1
 		(t)\right)\tfrac{\delta_2(y)}{y}\hspace*{1mm} d\lambda(t) \\
 		&\hspace*{0.8cm}+ \int\limits_{[y,1]} \left(\tfrac{x}{t}w_{\delta_1}(t)-
 		\tfrac{x}{t^2}\delta_1(t)\right)\left(\tfrac{y}{t}w_{\delta_2}(t)-
 		\tfrac{y}{t^2}\delta_2(t)\right)\hspace*{1mm} d\lambda(t) \\
 		&= \tfrac{1}{xy}\delta_1(x)\delta_2(y)x + \tfrac{x}{y}\delta_2(y)
 		\int\limits_{[x,y]} \left(\tfrac{\delta_1(t)}{t}\right)'\hspace*{1mm} d\lambda(t)\\
 		&\hspace*{0.8cm}+  xy\int\limits_{[y,1]} \left(\tfrac{\delta_1(t)}{t}\right)' 
 		\left(\tfrac{\delta_2(t)}{t}\right)'\hspace*{1mm} d\lambda(t)\\
 		&= \tfrac{1}{y}\delta_1(x)\delta_2(y) + \tfrac{x}{y}\delta_2(y)
 			\left(\tfrac{\delta_1(y)}{y}-
 		\tfrac{\delta_1(x)}{x}\right) + \\
 		&\hspace*{0.8cm}+ xy\int\limits_{[y,1]} \left(\tfrac{\delta_1(t)}{t}
 			\right)' 
 		\left(\tfrac{\delta_2(t)}{t}\right)'\hspace*{1mm} d\lambda(t)\\
 		&= \frac{x}{y^2}\delta_1(y)\delta_2(y) + xy\int\limits_{[y,1]} 
 		\left(\tfrac{\delta_1(t)}{t}\right)' 
 		\left(\tfrac{\delta_2(t)}{t}\right)'\hspace*{1mm} d\lambda(t).
 	\end{align*}
 	Analogously for $0 \leq y \leq x \leq 1$ we have
 	\begin{align*}
 		(S_{\delta_1}\ast S_{\delta_2})(x,y) &= \int\limits_{[0,y]} \tfrac{\delta_1(x)}{x}
 		\tfrac{\delta_2(y)}{y}\hspace*{1mm} d\lambda(t) 
 		+\int\limits_{[y,x]} \tfrac{\delta_1(x)}{x}\left(\tfrac{y}{t}w_{\delta_2}(t)-
 		\tfrac{y}{t^2}\delta_2(t)\right)\hspace*{1mm} d\lambda(t) \\
 		&\hspace*{0.8cm}+ \int\limits_{[x,1]} \left(\tfrac{x}{t}w_{\delta_1}(t)-
 		\tfrac{x}{t^2}\delta_1(t)\right)\left(\tfrac{y}{t}w_{\delta_2}(t)-
 		\tfrac{y}{t^2}\delta_2(t)\right)\hspace*{1mm} d\lambda(t) \\
 		&= \tfrac{1}{xy}\delta_1(x)\delta_2(y)y + \tfrac{y}{x}\delta_1(x)
 		\int\limits_{[y,x]} \left(\tfrac{\delta_2(t)}{t}\right)'\hspace*{1mm}d\lambda(t)\\
 		&\hspace*{0.8cm}+  xy\int\limits_{[x,1]} \left(\tfrac{\delta_1(t)}{t}\right)' 
 		\left(\tfrac{\delta_2(t)}{t}\right)'\hspace*{1mm} d\lambda(t)\\
 		&= \tfrac{1}{x}\delta_1(x)\delta_2(y) + \tfrac{y}{x}\delta_1(x)
 		\left(\tfrac{\delta_2(x)}{x}-
 		\tfrac{\delta_2(y)}{y}\right) \\
 		&\hspace*{0.8cm} + xy\int\limits_{[x,1]} 
 		\left(\tfrac{\delta_1(t)}{t}\right)' 
 		\left(\tfrac{\delta_2(t)}{t}\right)'\hspace*{1mm} d\lambda(t)\\
 		&= \frac{y}{x^2}\delta_1(x)\delta_2(x) + xy\int\limits_{[x,1]} 
 		\left(\tfrac{\delta_1(t)}{t}\right)' 
 		\left(\tfrac{\delta_2(t)}{t}\right)'\hspace*{1mm} d\lambda(t).
 	\end{align*}
 	Considering that for fixed $x \in (0,1]$ the two mappings
 	\begin{align*}
 		t \mapsto (S_{\delta_1}\ast S_{\delta_2})(t,x) &= t\bigg( \tfrac{1}{x^2}\delta_1(x)\delta_2(x) + 
 		x\int\limits_{[x,1]} 
 		 \left(\tfrac{\delta_1(u)}{ u}\right)'
 		\left(\tfrac{\delta_2(u)}{ u}\right)' \hspace*{1mm} d\lambda(u)\bigg),\\
 		t \mapsto (S_{\delta_1}\ast S_{\delta_2})(x,t) &= t\bigg( \tfrac{1}{x^2}\delta_1(x)\delta_2(x) + 
 		x\int\limits_{[x,1]} 
 		 \left(\tfrac{\delta_1(u)}{ u}\right)'
 		\left(\tfrac{\delta_2(u)}{ u}\right)' \hspace*{1mm} d\lambda(u)\bigg).
 	\end{align*}
 	are obviously linear $[0,x]$ it follows that	$S_{\delta_1}\ast S_{\delta_2} \in \CSL$ and the proof is complete.
 	\end{proof}
    Since the sets $\DSL$ and $\CSL$ are in one-to-one correspondence Theorem \ref{sternprodukt} implies that 
    the star product can be `translated' to the class $\DSL$. In fact, the diagonal of the copula 
    $S_{\delta_1}\ast S_{\delta_2}$ from Theorem \ref{sternprodukt} is obviously given by
    \begin{align*}
 			(S_{\delta_1}\ast S_{\delta_2})(x,x) = \frac{1}{x}\delta_1(x)\delta_2(x) 
 			+ x^2\int\limits_{[x,1]}
 			\left(\tfrac{\delta_1(u)}{ u}\right)'\left(\tfrac{\delta_2(u)}{ u}\right)' 
 			\hspace*{1mm} d\lambda(u).
 		\end{align*}
    This motivates the following definition.
    	\begin{Definition}\label{diagonalenprodukt}
 	For every pair $(\delta_1,\delta_2)$ of diagonals in $\DSL$ the star product $\delta_1 \ast \delta_2$
 	is defined by 
 	\begin{align}
 		(\delta_1 \ast \delta_2)(x) := \frac{1}{x}\delta_1(x)\delta_2(x) + 
 			x^2\int\limits_{[x,1]} 
 		\left(\tfrac{\delta_1(u)}{ u}\right)'
 		\left(\tfrac{\delta_2(u)}{ u}\right)' \hspace*{1mm} d\lambda(u),
 	 \end{align}
 	for every $x\in (0,1]$ as well as $(\delta_1 \ast \delta_2)(0):=0$. 
 	\end{Definition}
 	\begin{remark}
 	 Theorem \ref{sternprodukt} and the one-to-one correspondence of the families $\DSL$ and $\CSL$ mentioned above and in the 
 	 introduction imply that $\delta_1 \ast \delta_2 \in \DSL$ holds - since $S_{\delta_1}\ast S_{\delta_2}$ is 
 	 a LSL copula its diagonal is an element of $\DSL$. Moreover we obviously have the following identity for each 
 	 pair $(\delta_1,\delta_2) \in \DSL \times \DSL$, which we 
 	 will use various times in what follows:   
 	 \begin{equation}
 	   S_{\delta_1}\ast S_{\delta_2}=S_{\delta_1 \ast \delta_2}
 	 \end{equation}
 	 In other words, the mapping $\iota: \CSL \rightarrow \DSL$ assigning every LSL copula its diagonal
 	 (as well as its inverse $\iota^{-1}$) is an isomorphism with respect to the star product.  
 	\end{remark}
     Considering the diagonals $\delta_M$ and $\delta_\Pi$ of $M$ and $\Pi$, respectively, obviously the
 	following interrelations hold for every $\delta \in \DSL$:
 	\begin{align*}
 		\delta_\Pi \ast \delta &= \delta \ast \delta_\Pi = \delta_\Pi\\
 		\delta_M \ast \delta &= \delta \ast \delta_M = \delta
 	\end{align*} 	
 	
 	In what follows we will study the limit behavior of sequences $(S_\delta^{*n})_{n \in \mathbb{N}}$ where
 	$S_\delta^{*1}=S_\delta, S_\delta^{*2}=S_\delta * S_\delta, S_\delta^{*3}=S_\delta * S_\delta * S_\delta, \ldots$.
    Notice that for general copulas $A$, the sequence $(A^{*n})_{n \in \mathbb{N}}$ does not need to converge - 
    the simplest example being $W$ for which the sequence $(W^{*n})_{n \in \mathbb{N}}$ jumps between $W$ and $M$.  
    One can, however, show that the sequence 	$(A^{*n})_{n \in \mathbb{N}}$ is Cesáro convergent (even with respect to 
    a metric stronger than $d_\infty$), see \cite{trutp10} for more information.
    As we will show, for LSL copulas the sequence $(S_\delta^{*n})_{n \in \mathbb{N}}$ does converge - 	
 	the following simple but key lemma opens the door to deriving the just mentioned convergence without 
 	much technical ado. 
 	\begin{Lemma}\label{decreasingness}
 	 For all $\delta_1,\delta_2 \in \DSL$ the following inequality holds for every $t \in [0,1]:$
 	 \begin{align}
 		(\delta_1 \ast \delta_2) (t) \leq \min\lbrace \delta_1(t),\delta_2(t) \rbrace
 	 \end{align}
 	\end{Lemma}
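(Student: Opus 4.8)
The plan is to exploit the evident symmetry of the diagonal star product in Definition \ref{diagonalenprodukt}: the expression $(\delta_1 \ast \delta_2)(x)$ is unchanged under interchanging $\delta_1$ and $\delta_2$, so it suffices to establish the single inequality $(\delta_1 \ast \delta_2)(t) \leq \delta_1(t)$; the companion bound by $\delta_2(t)$ then follows by swapping roles, and the two together give the minimum. The boundary cases are immediate: at $t=0$ both sides vanish, while at $t=1$ the integral term disappears and both sides equal $1$ because $\delta_i(1)=1$. Hence I would fix $t \in (0,1)$ throughout and abbreviate $\varphi_i := \varphi_{\delta_i}$ and $\eta_i := \eta_{\delta_i}$, recalling that membership in $\DSL$ means $\varphi_i$ is non-decreasing and $\eta_i$ is non-increasing.

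First I would rewrite the claim purely in terms of $\varphi_1,\varphi_2$. Using $\delta_i(x)=x\varphi_i(x)$ the product term becomes $\tfrac1t\delta_1(t)\delta_2(t)=t\,\varphi_1(t)\varphi_2(t)$, so after dividing by $t$ the target inequality reads
\begin{align*}
\varphi_1(t)\varphi_2(t) + t\int_{[t,1]} \varphi_1'(u)\varphi_2'(u)\,d\lambda(u) \leq \varphi_1(t).
\end{align*}
Next, since $\delta_2(1)=1$ gives $\varphi_2(1)=1$, the fundamental theorem of calculus for the Lipschitz map $\varphi_2$ yields $\varphi_1(t)\bigl(1-\varphi_2(t)\bigr)=\varphi_1(t)\int_{[t,1]}\varphi_2'(u)\,d\lambda(u)$. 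Substituting this for the right-hand side and combining the two integrals, the assertion becomes equivalent to
\begin{align*}
\int_{[t,1]} \bigl(t\,\varphi_1'(u)-\varphi_1(t)\bigr)\varphi_2'(u)\,d\lambda(u) \leq 0.
\end{align*}

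The crux, and the step I expect to carry the entire argument, is a pointwise sign control of the integrand on $[t,1]$. Because $\varphi_2$ is non-decreasing we have $\varphi_2'\geq 0$ $\lambda$-almost everywhere, so it suffices to show $t\,\varphi_1'(u)\leq \varphi_1(t)$ for $u\geq t$. For this I would invoke the characterizing inequality (\ref{abschaetzen}): from $u\delta_1'(u)\leq 2\delta_1(u)$ one gets $\varphi_1'(u)=\tfrac{u\delta_1'(u)-\delta_1(u)}{u^2}\leq \tfrac{\delta_1(u)}{u^2}=\eta_1(u)$. Since $\eta_1$ is non-increasing and $u\geq t$, this gives $t\,\varphi_1'(u)\leq t\,\eta_1(u)\leq t\,\eta_1(t)=\tfrac{\delta_1(t)}{t}=\varphi_1(t)$, the desired bound. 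Multiplying by $\varphi_2'(u)\geq 0$ and integrating over $[t,1]$ makes the integrand non-positive, which is exactly the displayed inequality. The only point requiring a word of care is that $\varphi_i'$ and the bound (\ref{abschaetzen}) hold only $\lambda$-almost everywhere; but since these quantities enter solely inside the Lebesgue integral, the almost-everywhere validity of the pointwise estimate is all that the monotone-integrand argument needs, so the proof goes through unchanged.
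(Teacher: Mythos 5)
Your argument is correct and is essentially the paper's own proof in a lightly rearranged form: both rest on the chain $\varphi_{\delta_1}'(u)\leq \eta_{\delta_1}(u)\leq \eta_{\delta_1}(t)$ (via inequality (\ref{abschaetzen}) and monotonicity of $\eta_{\delta_1}$), the sign $\varphi_{\delta_2}'\geq 0$, and the evaluation $\int_{[t,1]}\varphi_{\delta_2}'\,d\lambda = 1-\varphi_{\delta_2}(t)$; you merely move everything to one side and check the integrand is pointwise non-positive, where the paper bounds the integral $I$ directly and then substitutes. No gap.
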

 	\begin{proof}
 		To simplify notation write $I:= \int\limits_{[t,1]} \left(\frac{\delta_1(u)}{ u}\right)'
 				\left(\frac{\delta_2(u)}{ u}\right)' \hspace*{1mm} d\lambda(u)$.
 		Applying inequality (\ref{abschaetzen}) and the fact that $u \mapsto \frac{\delta_1(u)}{u^2}$ 
 		is non-increasing on $[t,1]$ yields
 		\begin{align*}
 			I &= \int\limits_{[t,1]} \left(\tfrac{\delta_1(u)}{ u}\right)'
 					\left(\tfrac{\delta_2(u)}{ u}\right)' \hspace*{1mm} d\lambda(u) 
 					=\int\limits_{[t,1]} \left(\tfrac{w_{\delta_1}(u)u -
 					\delta_1(u)}{u^2}\right)\left(\tfrac{\delta_2(u)}{ u}\right)' 
 					\hspace*{1mm} d\lambda(u)\\
 			&\leq \int\limits_{[t,1]} \left(\tfrac{\frac{2\delta_1(u)}{u}u -
 					\delta_1(u)}{u^2}\right)\left(\tfrac{\delta_2(u)}{ u}\right)'
 					\hspace*{1mm} d\lambda(u)
 					= \int\limits_{[t,1]} \left(\tfrac{\delta_1(u)}{u^2}\right)
 					\left(\tfrac{\delta_2(u)}{ u}\right)' \hspace*{1mm} d\lambda(u)\\
 			&\leq \int\limits_{[t,1]} \left(\tfrac{\delta_1(t)}{t^2}\right)
 					\left(\tfrac{\delta_2(u)}{ u}\right)' \hspace*{1mm} d\lambda(u)
 					= \tfrac{\delta_1(t)}{t^2} \int\limits_{[t,1]}
 					\left(\tfrac{\delta_2(u)}{ u}\right)' \hspace*{1mm} d\lambda(u)\\
 			&=  \frac{\delta_1(t)}{t^2}\left( 1 - 
 				\frac{\delta_2(t)}{t}\right).
 		\end{align*}
 		It therefore follows immediately that
 		\begin{align*}
 			(\delta_1 \ast \delta_2) (t) &=  \frac{1}{t}\delta_1(t)\delta_2(t) + 
 					t^2\cdot I	\leq \frac{1}{t}\delta_1(t)\delta_2(t) + 
 					t^2 \cdot \frac{\delta_1(t)}{t^2}\left( 1 - \frac{\delta_2(t)}{t}
 					\right) \\
 			&= \frac{1}{t}\delta_1(t)\delta_2(t) + \delta_1(t) - \frac{1}{t}
 				\delta_1(t)\delta_2(t)= \delta_1(t)
 		\end{align*}
 		holds for $t \in (0,1]$.
 		Since the case $(\delta_1 \ast \delta_2)(t)  \leq \delta_2(t)$ follows analogously we obtain the 
 		desired inequality
 		\begin{align*}
 			\delta_1\ast\delta_2(t)\leq \min\lbrace \delta_1(t),\delta_2(t) \rbrace.
 		\end{align*}
 	\end{proof}
 	Lemma \ref{decreasingness} has the following nice consequence: 
 	\begin{Theorem}\label{dinftyconv}
 		Suppose that $\delta \in \DSL$. Then there exists some 
            $\overline{\delta}\in \DSL$ such that the sequence
 		$(\delta^{\ast n})_{n \in \mathbb{N}}$ converges to $\overline{\delta}$ uniformly.
 		Moreover we have  
 		\begin{align}\label{conveq}
 		\lim\limits_{n\to \infty} d_\infty\left(S_{\delta}^{\ast n},S_{\overline{\delta}} \right)= 0
 		\end{align}
 		and the limit $S_{\overline{\delta}}$ is idempotent.
 	\end{Theorem}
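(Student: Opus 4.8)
The plan is to establish the four assertions in sequence: pointwise convergence of the diagonals, membership of the limit in $\DSL$, the upgrade to uniform (and hence $d_\infty$) convergence, and finally idempotency, which is the only genuinely delicate point. First I would use that $\delta^{*n}\in\DSL$ for every $n$ (by the remark after Definition \ref{diagonalenprodukt}) and that the diagonal star product is commutative (its formula is symmetric in the two factors), so $\delta^{*(n+1)}=\delta^{*n}\ast\delta$. Lemma \ref{decreasingness} then gives $\delta^{*(n+1)}(t)\le\delta^{*n}(t)$ for every $t$, so the sequence $(\delta^{*n}(t))_{n}$ is non-increasing and bounded below by $\delta_\Pi(t)=t^2$; hence it converges pointwise to some $\dd(t)$. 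Since every $\delta^{*n}$ lies below the identity, equals $1$ at $1$, is $2$-Lipschitz, and has $\varphi_{\delta^{*n}}$ non-decreasing and $\eta_{\delta^{*n}}$ non-increasing, each of these properties passes to the pointwise limit, so $\dd\in\DSL$.

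Next I would upgrade to uniform convergence: the family $(\delta^{*n})_n$ is equi-Lipschitz (all $2$-Lipschitz), so pointwise convergence on the compact interval $[0,1]$ forces uniform convergence (alternatively Dini's theorem applies, the convergence being monotone toward the continuous limit $\dd$). To transfer this to the copulas, note from (\ref{semcopula1}) that for all $(x,y)$ one has $|S_{\delta^{*n}}(x,y)-S_{\dd}(x,y)|\le \sup_{t}|\delta^{*n}(t)-\dd(t)|$, since in each of the two regions the difference is $\tfrac{y}{x}$ or $\tfrac{x}{y}$ times a difference of diagonal values. Together with $S_\delta^{*n}=S_{\delta^{*n}}$ this yields $d_\infty(S_\delta^{*n},S_{\dd})\to 0$, i.e. (\ref{conveq}).

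For idempotency, one inequality, $\dd\ast\dd\le\dd$, is immediate from Lemma \ref{decreasingness}; the reverse is the crux. My plan is to prove a weak-$L^2$ convergence of the derivatives and deduce separate continuity of the diagonal star product along the sequence. Writing $\varphi_n:=\varphi_{\delta^{*n}}$ and fixing $x>0$, inequality (\ref{abschaetzen}) gives $0\le\varphi_n'(u)=\tfrac{u\,\delta^{*n}{}'(u)-\delta^{*n}(u)}{u^2}\le \tfrac{\delta^{*n}(u)}{u^2}\le\tfrac1u\le\tfrac1x$ on $[x,1]$, so the $\varphi_n'$ are bounded in $L^2([x,1])$. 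Testing against indicators, $\int_a^b\varphi_n'\,d\lambda=\varphi_n(b)-\varphi_n(a)\to\varphi_{\dd}(b)-\varphi_{\dd}(a)=\int_a^b\varphi_{\dd}'\,d\lambda$ (the last equality because $\dd$ is $2$-Lipschitz, hence $\varphi_{\dd}$ is absolutely continuous on $[x,1]$); since step functions are dense in $L^2$ and the sequence is $L^2$-bounded, it follows that $\varphi_n'\rightharpoonup\varphi_{\dd}'$ weakly in $L^2([x,1])$. Consequently, for any fixed $\gamma\in\DSL$ (whose $\varphi_\gamma'$ again lies in $L^2([x,1])$), the pairing $\int_{[x,1]}\varphi_\gamma'\,\varphi_n'\,d\lambda\to\int_{[x,1]}\varphi_\gamma'\,\varphi_{\dd}'\,d\lambda$, which combined with the uniform convergence of the boundary term is exactly continuity of the map $\eta\mapsto\gamma\ast\eta$ along $\eta=\delta^{*n}$.

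Closing the argument, I would fix $m$ and let $n\to\infty$ in $\delta^{*m}\ast\delta^{*n}=\delta^{*(m+n)}$: the left side tends to $\delta^{*m}\ast\dd$ by the continuity just established (fixed factor $\varphi_{\delta^{*m}}'$), the right side tends pointwise to $\dd$, giving $\delta^{*m}\ast\dd=\dd$ for every $m$. Letting $m\to\infty$ and using the same weak-convergence pairing, now with the fixed factor $\varphi_{\dd}'$, yields $\delta^{*m}\ast\dd\to\dd\ast\dd$, whereas the left side is constantly $\dd$; hence $\dd\ast\dd=\dd$ and $S_{\dd}$ is idempotent. The hard part is precisely this interchange of limit and integral: uniform convergence of the $\delta^{*n}$ does \emph{not} control the product $\varphi_n'\varphi_m'$ of two moving derivatives (indeed $\int(\varphi_n')^2$ need not converge to $\int(\varphi_{\dd}')^2$, as the derivatives may oscillate), so the essential device is to arrange every passage to the limit so that one factor is held fixed and only weak $L^2$ convergence of the other is invoked.
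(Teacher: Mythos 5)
Your argument is correct. The first part (monotone pointwise convergence of the diagonals via Lemma \ref{decreasingness}, stability of $\DSL$ under the limit, upgrade to uniform convergence by equi-Lipschitzness, and the transfer $d_\infty(S_{\delta^{\ast n}},S_{\dd})\le\Vert\delta^{\ast n}-\dd\Vert_\infty$) is essentially identical to the paper's, which phrases the same facts via compactness of $(\DSL,\Vert\cdot\Vert_\infty)$. Where you genuinely diverge is the idempotence step. The paper disposes of it in three lines by observing that $d_\infty$-convergence of the sequence implies $d_\infty$-convergence of its Ces\'aro means to the same limit, and then invoking the external result of \cite{trutp10} that the Ces\'aro means of star-product iterates always converge (in the stronger metric $D_1$) to an idempotent copula; uniqueness of limits then forces $S_{\dd}$ to be that idempotent. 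You instead give a self-contained analytic proof: the uniform bound $0\le\varphi_n'\le 1/x$ on $[x,1]$ coming from (\ref{abschaetzen}), convergence of $\int_a^b\varphi_n'$ to $\int_a^b\varphi_{\dd}'$, and density of step functions give weak $L^2$-convergence $\varphi_n'\rightharpoonup\varphi_{\dd}'$, whence the diagonal star product is separately continuous along the sequence; passing to the limit first in $n$ and then in $m$ in $\delta^{\ast m}\ast\delta^{\ast n}=\delta^{\ast(m+n)}$, each time holding one factor fixed so that only weak convergence of the other is needed, yields $\dd\ast\dd=\dd$. Your closing remark correctly identifies why the naive interchange fails (weak convergence does not control $\int(\varphi_n')^2$), and the two-step limit is a clean way around it. The trade-off: the paper's route is shorter but leans on a nontrivial imported theorem about $D_1$-Ces\'aro convergence, while yours is longer but entirely elementary and internal to the diagonal calculus developed in Section 4; it also exhibits the stronger intermediate fact $\delta^{\ast m}\ast\dd=\dd$ for every $m$, which the paper's argument does not produce.
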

 	\begin{proof}
        First of notice that Lemma \ref{decreasingness} implies that the sequence 
        $(\delta^{\ast n}(t))_{n \in \mathbb{N}}$ is monotonically non-increasing for every $t \in [0,1]$. 
        Considering $\delta_\Pi \leq \delta^{\ast n} \leq \delta_M$ the sequence is bounded so 
        it follows immediately that $(\delta^{\ast n}(t))_{n \in \mathbb{N}}$ converges to 
        some point $\overline{\delta}(t) \in [t^2,t]$. Since $t \in [0,1]$ was arbitrary and $\DSL$ is 
        (as closed subset of a family of Lipschitz continuous functions) compact with respect to the supremum norm 
        $\Vert \cdot \Vert_\infty$ on $[0,1]$ it follows that $\overline{\delta} \in \DSL$ and that 
        $$\lim_{n \rightarrow \infty} \Vert \delta^{\ast n} - \overline{\delta} \Vert_\infty=0.$$  
        The very form of LSL copulas according to equation (\ref{semcopula1}) implies that for every 
        $(x,y) \in [0,1]^2$ the sequence $(S_{\delta}^{\ast n}(x,y))_{n \in \mathbb{N}}$ converges to  
        $S_{\overline{\delta}}(x,y)$. Using Lipschitz continuity therefore yields equation (\ref{conveq})  
        and it remains to prove idempotence, which can easily be done as follows.
        	Convergence of the sequence $(S_{\delta^{\ast n}})_{n \in \mathbb{N}}$ to 
 		$S_{\overline{\delta}}$ with respect to $d_\infty$ implies Cesáro convergence, i.e., 
 		\begin{align*}
 			\lim\limits_{n\to \infty} d_\infty\left(\frac{1}{n}\sum\limits_{i=1}^n 
 				S_{\delta^{\ast i}}, S_{\overline{\delta}} \right) = 0
 		\end{align*}
 		holds. Applying \cite[Theorem 2]{trutp10} and using the fact that convergence with respect to 
 		the metric $D_1$ implies convergence with respect to $d_\infty$ yields that $S_{\overline{\delta}}$ is idempotent.
 	\end{proof}
 	\begin{example}
     We return to the diagonal $u_a$ from Example \ref{KerneAB}. Obviously the corresponding LSL copula $S_{u_a}$ 
 	is the ordinal sum of $\langle\Pi,M\rangle$ with respect to $\langle0,a,1\rangle$ (see \cite{DuSe, NelsenCopulas} for 
 	background on ordinal sums) and as such idempotent. 
 	\end{example}
 	It turns out that all idempotent LSL copulas are of the form $S_{u_a}$ for some $a \in [0,1]$, so 
 	the family of idempotent LSL copulas is quite small and fully determined by one parametric function. 
 	Notice that, on the contrast, general idempotent copulas can be very diverse and complex - in fact, there are
 	idempotent copulas with fractal support, see \cite{TruFS}.   
 	\begin{Theorem}
	The following two conditions are equivalent for $\delta \in \DSL$:
	\begin{enumerate}
		\item $\delta * \delta = \delta$.
		\item There exists some $a\in [0,1]$ such that $\delta=u_a$.
	\end{enumerate}
	\end{Theorem}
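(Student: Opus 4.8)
The plan is to treat the two implications separately; the content is almost entirely in $(1)\Rightarrow(2)$, while $(2)\Rightarrow(1)$ is essentially already done. For the latter I would invoke the preceding example: $S_{u_a}$ is the ordinal sum of $\langle\Pi,M\rangle$ with respect to $\langle0,a,1\rangle$ and hence idempotent, so $S_{u_a}\ast S_{u_a}=S_{u_a}$; since the map $\iota\colon\CSL\to\DSL$ is a star-product isomorphism, this is equivalent to $u_a\ast u_a=u_a$. (One could instead verify $u_a\ast u_a=u_a$ by hand from Definition \ref{diagonalenprodukt}, splitting the integral at $a$, but this is unnecessary.)

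For $(1)\Rightarrow(2)$ the first step is to translate the fixed-point equation into a condition on $\varphi_\delta$. Putting $\delta_1=\delta_2=\delta$ in Definition \ref{diagonalenprodukt}, using $(\delta(u)/u)'=\varphi_\delta'(u)$, and dividing the identity $(\delta\ast\delta)(x)=\delta(x)$ by $x$, I obtain for every $x\in(0,1]$ the integral equation
\[
\varphi_\delta(x)\bigl(1-\varphi_\delta(x)\bigr)=x\int_{[x,1]}\bigl(\varphi_\delta'(u)\bigr)^2\,d\lambda(u).
\]
Call this relation $(\star)$. Recall that $\varphi_\delta$ is continuous and non-decreasing on $(0,1]$ with $\varphi_\delta\le1$ and $\varphi_\delta(1)=1$; hence $\{x:\varphi_\delta(x)=1\}$ is an interval $[a,1]$, and I set $a:=\inf\{x:\varphi_\delta(x)=1\}$. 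On $(a,1]$ both sides of $(\star)$ vanish, so the whole argument takes place on $(0,a)$, where $\varphi_\delta<1$.

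The key step is to differentiate $(\star)$ on $(0,a)$. Writing $F(x):=\int_{[x,1]}(\varphi_\delta')^2\,d\lambda$, so that $F'(x)=-(\varphi_\delta'(x))^2$ almost everywhere, differentiating $(\star)$ and re-substituting $F(x)=\varphi_\delta(x)(1-\varphi_\delta(x))/x$ gives, after multiplication by $x$, the almost-everywhere identity
\[
\bigl(x\varphi_\delta'(x)\bigr)^2+\bigl(x\varphi_\delta'(x)\bigr)\bigl(1-2\varphi_\delta(x)\bigr)-\varphi_\delta(x)\bigl(1-\varphi_\delta(x)\bigr)=0.
\]
Viewed as a quadratic in $p:=x\varphi_\delta'(x)$ its discriminant is $(1-2\varphi_\delta)^2+4\varphi_\delta(1-\varphi_\delta)=1$, so that $p=\varphi_\delta$ or $p=\varphi_\delta-1$ almost everywhere. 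Since $\varphi_\delta$ is non-decreasing we have $p=x\varphi_\delta'\ge0$, whereas $\varphi_\delta-1<0$ on $(0,a)$; the second root is therefore impossible and $x\varphi_\delta'(x)=\varphi_\delta(x)$ holds almost everywhere on $(0,a)$. This is exactly $\eta_\delta'(x)=(\varphi_\delta/x)'(x)=0$, so the (locally absolutely continuous) function $\eta_\delta$ is constant on $(0,a)$, i.e. $\varphi_\delta(x)=cx$ there. Continuity at $a$ forces $ca=\varphi_\delta(a)=1$, hence $c=1/a$, giving $\varphi_\delta(x)=x/a$ on $(0,a]$ and $\varphi_\delta(x)=1$ on $[a,1]$; this is precisely $\varphi_{u_a}$, so $\delta=u_a$. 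The degenerate cases $a=0$ and $a=1$ recover $\delta_M=u_0$ and $\delta_\Pi=u_1$.

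I expect the only genuine obstacle to be the analytic bookkeeping around the differentiation of $(\star)$: one must ensure that $\varphi_\delta$ is locally Lipschitz on $(0,1]$ (so $F$ is finite and both sides of $(\star)$ are absolutely continuous on every $[\varepsilon,1]$, which is legitimate because the integration never reaches $0$), justify $F'=-(\varphi_\delta')^2$ almost everywhere via the Lebesgue differentiation theorem, and treat the continuity of $\varphi_\delta$ at the threshold $a$ together with the boundary values $a\in\{0,1\}$ carefully. The remaining algebra — in particular the collapse of the discriminant to $1$ — is elementary.
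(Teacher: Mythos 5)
Your proposal is correct, and for the substantive implication $(1)\Rightarrow(2)$ it takes a genuinely different route from the paper. The paper never writes down your integral equation $(\star)$; instead it revisits the proof of Lemma \ref{decreasingness} and tracks where the two inequalities used there (namely $uw_\delta(u)-\delta(u)\le\delta(u)$ and $\delta(u)/u^2\le\delta(t)/t^2$) must become equalities once $(\delta*\delta)(t)=\delta(t)$: this forces $\delta(s)/s^2=\delta(t)/t^2$ at every $s\ge t$ with $\bigl(\tfrac{\delta}{id}\bigr)'(s)>0$, and since such $s$ accumulate near the threshold, $\eta_\delta$ is constant below $a_0=\inf\{t:\delta(t)=t\}$; the fact that $\{t:\delta(t)=t\}$ is an interval $[a_0,1]$ is quoted from Durante et al. Your version instead differentiates $(\star)$ to obtain the quadratic $(x\varphi_\delta')^2+x\varphi_\delta'(1-2\varphi_\delta)-\varphi_\delta(1-\varphi_\delta)=0$, which factors as $\bigl(x\varphi_\delta'-\varphi_\delta\bigr)\bigl(x\varphi_\delta'+1-\varphi_\delta\bigr)=0$, and discards the negative root using $x\varphi_\delta'\ge 0$; you also get the interval structure of $\{\varphi_\delta=1\}$ for free from monotonicity of $\varphi_\delta$ together with $\varphi_\delta\le1$, which is arguably cleaner than the citation. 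Both arguments land on the same punchline ($\eta_\delta$ constant on $(0,a)$, then continuity at $a$ pins down the constant), and the analytic bookkeeping you flag — local Lipschitz continuity of $\varphi_\delta$ on $(0,1]$, $F'=-(\varphi_\delta')^2$ $\lambda$-a.e., and agreement a.e.\ of the derivatives of two everywhere-equal locally absolutely continuous functions — is all legitimate and no heavier than what the paper's equality-tracking implicitly uses. The direction $(2)\Rightarrow(1)$ is handled identically in both, via the ordinal-sum example and the isomorphism $\iota$.
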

	\begin{proof}
	It has already been mentioned that every LSL copula $S_{u_a}$ is idempotent, it suffices to show that the first 
	assertion implies the second one. 
	Consider the set $$F_\delta:=\{x \in (0,1): \delta(x)=x\}$$ and distinguish two cases: \\
	(i) $F_\delta= \emptyset$: In this case for every $t \in (0,1)$ there exists 
	some $s \in [t,1]$ fulfilling $\left(\frac{\delta}{id} \right)'(s)>0$. In fact, 
	if this was not the case then we could find some $t \in [0,1)$ fulfilling
	that for $\lambda$-almost every $s \in [t,1]$ we have
	$\left(\frac{\delta}{id} \right)'(s)=0$. This, however, implies 
	$$
	0 = \int_{[t,1]} \left(\frac{\delta}{id} \right)' d\lambda = \frac{\delta(1)}{1}
	 - \frac{\delta(t)}{t} = 1- \frac{\delta(t)}{t},
	$$ 
	which obviously contradicts $F_\delta= \emptyset$. \\
	From the proof of the inequality $(\delta_1 * \delta_2)(t) \leq \min\{\delta_1(t),
	\delta_2(t)\}$ we know that in case of $(\delta * \delta)(t)=\delta(t)$ for 
	every $s\in[t,1)$ with $\left(\frac{\delta}{id} \right)'(s)>0$ we have 
	$$
	0 < \left(\frac{\delta}{id} \right)'(s) = \frac{s \delta'(s)-\delta(s)}{s^2} 
	= \frac{\delta(s)}{s^2} = \frac{\delta(t)}{t^2}. 
	$$
	As a direct consequence, for $0 < t_1 < t_2 < 1$ and every $s \in [t_2,1)$ with 
	$\left(\frac{\delta}{id} \right)'(s)>0$ it follows that
	$$
	\frac{\delta(t_2)}{t_2^2} = \frac{\delta(s)}{s^2} = \frac{\delta(t_1)}{t_1^2}.  
	$$ 
	Since such $s$ exist arbitrarily close to $1$ and we have $\frac{\delta(1)}{1}=1$ 
	it follows that the function $s \mapsto \frac{\delta(s)}{s^2}$ is identical to 
	$1$ on the whole interval $(0,1)$. This shows the $\delta=u_1$
	and completes the proof for $F_\delta= \emptyset$. \\
	(ii) If $F_\delta \neq \emptyset$ then according to \cite{DuranteSLC} for every 
	$s \in F_\delta$ we even have $[s,1] \subseteq F_\delta$. 
	Set $a_0:= \inf\{t \in (0,1): \delta(t)=t\}$. Since for $a_0=0$ it follows that 
	$F_\delta=[0,1]$, implying $\delta=u_0$, it suffices to consider $a_0 > 0$. 
	Proceeding analogously to the proof of (i) we conclude that the function 
	$s \mapsto \frac{\delta(s)}{s^2}$ is constant on the interval $[0,a_0]$, which 
	finally yields $\delta=u_{a_0}$.
	\end{proof}
    We have already mentioned before that $\CSL$ is closed with respect to the star product.  	
    We close this section with showing that the star product of two non-LSL copulas may be a 
    LSL copula and prove the somewhat surprising fact that the star product of every Marshall
 	Olkin copula $M_{\alpha,\beta}$ with its transpose is a LSL copula.
 	Recall that for $\alpha, \beta \in [0,1]$ the Marshall-Olkin copula $M_{\alpha,\beta}$ is given by 
 	$$ M_{\alpha,\beta} (u,v) = \min\lbrace u^{1-\alpha}v, uv^{1-\beta}\rbrace,$$
 	so $M_{\alpha,\beta}$ is in general not a LSL copula. 
 	According to \cite{FuchsMO} 
 	$M_{\beta,\alpha} \ast M_{\alpha,\beta}$ is given by
 	\begin{align}\label{markovMO}
 		M_{\beta,\alpha} \ast M_{\alpha,\beta} = \begin{cases} 
		\Pi + \frac{\alpha^2}{1-2\alpha}\Pi\left( 1- (\Pi)^{\frac{\beta-2\alpha\beta}
		{\alpha}} M^{\frac{2\alpha\beta-\beta}{\alpha}}\right) & 
		\alpha \notin \lbrace 0,\frac{1}{2},
			1\rbrace,\\
		\Pi & \alpha = 0,\\
		\Pi + \frac{\beta}{2}\Pi\left(\log(M)-\log(\Pi)\right) & \alpha = \frac{1}{2},\\
		M_{\beta,\beta} & \alpha = 1.
 		\end{cases}
 	\end{align}
 	It is straightforward to verify that for all $x\in (0,1]$ the mapping
 	\begin{align*}
 	t \mapsto (M_{\beta,\alpha} \ast M_{\alpha,\beta}) (t,x) = \begin{cases} 
 			t\left(x +\frac{\alpha^2}{1-2\alpha}x-\frac{\alpha^2}{1-2\alpha}x^{\frac{\beta-2\alpha\beta}{\alpha}+1}
 			\right) & \alpha \notin \lbrace 0,\frac{1}{2},1\rbrace,\\
 			tx & \alpha = 0,\\
 			t\left( x - \frac{\beta}{2}x\log(x)\right) & \alpha = \frac{1}{2},\\
 			tx^{1-\beta} & \alpha = 1,
 			\end{cases}
 	\end{align*}
 	is linear on $[0,x]$. Considering symmetry of $M_{\beta,\alpha} \ast M_{\alpha,\beta}$ it follows that 
 	$t \mapsto  M_{\beta,\alpha} \ast M_{\alpha,\beta} (x,t)$ is linear on $[0,x]$ as well. We have therefore shown the 
 	following result: 
 	\begin{Theorem}
 		For every Marshall-Olkin copula $M_{\alpha,\beta}$ the star product 
 		$M_{\beta,\alpha} \ast M_{\alpha,\beta}$ is a LSL copula. 
 	\end{Theorem}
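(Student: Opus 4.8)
The plan is to verify the statement directly from the definition of a lower semilinear copula recalled in Section~2: writing $C:=M_{\beta,\alpha}\ast M_{\alpha,\beta}$, it suffices to show that for every fixed $x\in(0,1]$ both sections $t\mapsto C(t,x)$ and $t\mapsto C(x,t)$ are linear on $[0,x]$. The only input I need is the explicit formula for $C$ quoted in~(\ref{markovMO}) from~\cite{FuchsMO}. Before computing anything I would record a symmetry reduction: in each of the four cases the right-hand side of~(\ref{markovMO}) is assembled purely from the symmetric copulas $\Pi$ and $M$ (or, in the case $\alpha=1$, equals $M_{\beta,\beta}$, which is itself symmetric in its two arguments), so $C$ is symmetric and it is enough to treat the horizontal section $t\mapsto C(t,x)$ on $[0,x]$.

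The heart of the argument is a short evaluation of~(\ref{markovMO}) on the region $\{t\le x\}$. There one has $M(t,x)=\min\{t,x\}=t$ and $\Pi(t,x)=tx$, and I would substitute these into each case. The load-bearing observation is that in the generic case $\alpha\notin\{0,\tfrac12,1\}$ the two exponents appearing in~(\ref{markovMO}) satisfy $\tfrac{\beta-2\alpha\beta}{\alpha}+\tfrac{2\alpha\beta-\beta}{\alpha}=0$; consequently the product $\Pi^{(\beta-2\alpha\beta)/\alpha}M^{(2\alpha\beta-\beta)/\alpha}$ collapses on $\{t\le x\}$ to $(tx)^{(\beta-2\alpha\beta)/\alpha}\,t^{-(\beta-2\alpha\beta)/\alpha}=x^{(\beta-2\alpha\beta)/\alpha}$, eliminating all dependence on $t$ except through the leading factor $\Pi=tx$. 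The same phenomenon occurs for $\alpha=\tfrac12$, where $\log M(t,x)-\log\Pi(t,x)=\log t-\log(tx)=-\log x$ is again free of $t$. Hence in every case $C(t,x)$ reduces to $t$ multiplied by a quantity depending only on $x$, i.e.\ it is linear in $t$ on $[0,x]$.

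I expect the genuine obstacle to be nothing more than careful bookkeeping in the degenerate regimes $\alpha\in\{0,\tfrac12,1\}$, where the form of~(\ref{markovMO}) changes. The case $\alpha=1$ requires one extra remark: for $t\le x$ one must identify the active branch of the minimum defining $M_{\beta,\beta}(t,x)=\min\{t^{1-\beta}x,\,tx^{1-\beta}\}$, and since $(x/t)^{\beta}\ge 1$ forces $tx^{1-\beta}\le t^{1-\beta}x$, the section equals $tx^{1-\beta}$, which is again linear in $t$. Once linearity of $t\mapsto C(t,x)$ on $[0,x]$ is confirmed in all four regimes, the symmetry reduction from the first step yields linearity of $t\mapsto C(x,t)$ as well, and the defining property of an element of $\CSL$ is met for every $x\in(0,1]$. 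I would single out the exponent-cancellation identity (and its logarithmic analogue at $\alpha=\tfrac12$) as the one conceptual point; everything else is routine simplification already mirrored in the computation preceding the theorem.
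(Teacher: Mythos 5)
Your proposal is correct and follows essentially the same route as the paper: both take the explicit formula (\ref{markovMO}) for $M_{\beta,\alpha}\ast M_{\alpha,\beta}$ from \cite{FuchsMO}, evaluate it case by case on $\{t\le x\}$ where $\Pi=tx$ and $M=t$, observe that the exponent cancellation (resp.\ the logarithmic cancellation at $\alpha=\tfrac12$ and the branch identification at $\alpha=1$) leaves a section of the form $t$ times a function of $x$ alone, and invoke symmetry for the other section. No gaps; the verification matches the computation displayed just before the theorem in the paper.
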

 	

 \section{Concordance of LSL copulas}
 \noindent We conclude this paper by studying concordance of LSL copulas and 
 investigating the exact region $\Omega^{LSL}$ determined by Kendall's $\tau$ and 
 Spearman's $\rho$, which is given by
    \begin{equation}\label{omega}
    \Omega^{LSL}:=\left\{(\tau(S_\delta),\rho(S_\delta)): \, S_\delta \in \CSL \right\}.
    \end{equation}	
 	Recall that given a pair $(X,Y)$ of random variables with continuous joint distribution function $H$ both, 
    Kendall's $\tau$ and Spearman's $\rho$ only depend on the unique copula $C$ underlying $(X,Y)$ and the 
    following formulas hold (see \cite{NelsenCopulas, SPT}): 	
 	\begin{align}
 		\tau(C) &:= 4 \int\limits_{[0,1]^2} C(u,v) \hspace*{1mm} d\mu_C(u,v) -1 \nonumber \\
 		\rho(C) &:= 12 \int\limits_{[0,1]^2} C(u,v) \hspace*{1mm} d\lambda_2(u,v) -3.
 		\label{rhoalg}
 	\end{align}
 	\subsection{Kendall's $\tau$ and Spearman's $\rho$}
    	For LSL copulas the formulas (\ref{rhoalg}) boil down to integrals only 
        involving the corresponding diagonal, the 
    	following result holds:
 	\begin{Lemma}\label{sprho}
 		For every LSL copula $\Sd \in \CSL$ Spearman's $\rho$ is given by
 		\begin{align}
 			\rho(\Sd) = 12\int\limits_{[0,1]} \delta(x)x \hspace*{1mm} d\lambda(x) - 3.
		\end{align}
 	\end{Lemma}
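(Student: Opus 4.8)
The plan is to insert the explicit piecewise representation (\ref{semcopula1}) of $\Sd$ directly into the integral appearing in the definition (\ref{rhoalg}) of Spearman's $\rho$ and to evaluate it by iterated integration. Since the integrand $\Sd$ is nonnegative and bounded, Tonelli's theorem applies and guarantees that the order of integration may be chosen freely, so no integrability subtleties arise.

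First I would exploit the symmetry $\Sd(x,y)=\Sd(y,x)$ to reduce the double integral to the lower triangle $\{(x,y):\, y \le x\}$. Because the diagonal $\{x=y\}$ (and likewise the line $\{x=0\}$) is a $\lambda_2$-null set, it is harmless to restrict attention to $y<x$, where (\ref{semcopula1}) gives $\Sd(x,y)=y\,\delta(x)/x$. This yields
\begin{align*}
\int\limits_{\unit^2} \Sd(x,y)\, d\lambda_2(x,y)
 = 2\int\limits_{\{y<x\}} y\,\frac{\delta(x)}{x}\, d\lambda_2(x,y).
\end{align*}
Next I would integrate first in $y$ over $[0,x]$ for fixed $x$, using $\int_{[0,x]} y\, d\lambda(y)=x^2/2$, so that
\begin{align*}
\int\limits_{\unit^2} \Sd(x,y)\, d\lambda_2(x,y)
 = 2\int\limits_{\unit} \frac{\delta(x)}{x}\cdot \frac{x^2}{2}\, d\lambda(x)
 = \int\limits_{\unit} \delta(x)\,x\, d\lambda(x).
\end{align*}
Substituting this into (\ref{rhoalg}) immediately gives $\rho(\Sd)=12\int_{\unit} \delta(x)\,x\, d\lambda(x)-3$, as claimed.

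The computation is entirely elementary and I do not anticipate any genuine obstacle; the only points requiring a line of justification are the appeal to Tonelli (licensed by nonnegativity of $\Sd$) and the observation that the diagonal $\{x=y\}$ and the line $\{x=0\}$ carry no $\lambda_2$-mass, so that the convention $\tfrac{0}{0}:=0$ on these null sets does not affect the value of the integral.
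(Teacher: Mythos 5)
Your proposal is correct and follows essentially the same route as the paper: both plug the piecewise form of $S_\delta$ into the defining integral for $\rho$, use the symmetry $S_\delta(x,y)=S_\delta(y,x)$ to reduce to twice the integral over the lower triangle, and then evaluate the inner integral $\int_{[0,x]} y\, d\lambda(y)=x^2/2$. Your added remarks on Tonelli and on the null sets $\{x=y\}$ and $\{x=0\}$ are harmless extra care but do not change the argument.
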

 	\begin{proof} 
		Plugging in $S_\delta$ and using symmetry we obtain
 		\begin{align*}
 			\rho(\Sd) &= 12\int\limits_{[0,1]^2}\Sd(x,y)\hspace*{1mm}d\lambda_2(x,y)-3
 						= 12 \int\limits_{[0,1]}\int\limits_{[0,1]} \Sd(x,y) \hspace*{1mm}
 							d\lambda(y)d\lambda(x)-3\\
 						&= 12 \int\limits_{[0,1]} \bigg( \int\limits_{[0,x]} y 
 							\tfrac{\delta(x)}{x} \hspace*{1mm} d\lambda(y) 
 							+ \int\limits_{[x,1]} x \tfrac{\delta(y)}{y}
 							\hspace*{1mm} d\lambda(y)\bigg) d\lambda(x) - 3\\
 						&= 12 \int\limits_{[0,1]} \bigg( 2 \cdot \int\limits_{[0,x]} y 
 							\tfrac{\delta(x)}{x} \hspace*{1mm} d\lambda(y) \bigg) 
 							d\lambda(x) - 3 = 24 \int\limits_{[0,1]} \tfrac{\delta(x)}{x} 
 							\int\limits_{[0,x]} y 
 							\hspace*{1mm} d\lambda(y) d\lambda(x) - 3\\
 						&= 24 \int\limits_{[0,1]} \tfrac{\delta(x)}{x} \tfrac{x^2}{2} 	
 							d\lambda(x)-3
 						= 12\int\limits_{[0,1]} \delta(x)x \hspace*{1mm} d\lambda(x) - 3.
 		\end{align*}
 	\end{proof}
    In order to derive a simple expression for Kendall's $\tau$ of LSL copulas we will use the subsequent  	
 	handy identity which can be proved via disintegration (see, e.g., \cite{NelsenCopulas}): 
 	\begin{align}\label{mua}
 		\int\limits_{[0,1]^2} B \hspace*{1mm} d\mu_A = \frac{1}{2} - 
 			\int\limits_{[0,1]^2} K_B(x,[0,y]) K_{A^t}(y,[0,x]) 
 			\hspace*{1mm} d\lambda_2(x,y)
 	\end{align}
 	\begin{Lemma}\label{kendaltau}
 		For every LSL  copula $\Sd \in \CSL$ Kendall's $\tau$ 
 		is given by
 		\begin{align}
 			\tau(\Sd) = 4\int\limits_{[0,1]}\frac{\delta(x)^2}{x} 
 							\hspace*{1mm}d\lambda(x) -1.
		\end{align}
 	\end{Lemma}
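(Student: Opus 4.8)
The plan is to apply the disintegration identity~(\ref{mua}) with $A=B=S_\delta$ directly to the defining formula $\tau(S_\delta)=4\int_{[0,1]^2}S_\delta\,d\mu_{S_\delta}-1$. Since $S_\delta$ is symmetric we have $S_\delta^t=S_\delta$, hence $K_{S_\delta^t}=K_{S_\delta}$, and~(\ref{mua}) collapses the problem to evaluating a single double integral:
\begin{align*}
\tau(S_\delta)=1-4\int\limits_{[0,1]^2}K_{S_\delta}(x,[0,y])\,K_{S_\delta}(y,[0,x])\hspace*{1mm}d\lambda_2(x,y).
\end{align*}
The integrand is invariant under swapping $x$ and $y$, so it suffices to integrate over the triangle $\{y<x\}$ and to double the outcome, the diagonal $\{y=x\}$ being $\lambda_2$-null.

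Next I would substitute the Markov kernel from Theorem~\ref{kern}. On $\{y<x\}$ the first factor lies in the branch $y<x$ and, writing $w_\delta=\delta'$, equals $y\left(\tfrac{\delta(x)}{x}\right)'$, while the second factor $K_{S_\delta}(y,[0,x])$ lies in the branch $x\geq y$ and equals $\tfrac{\delta(x)}{x}$. Carrying out the inner integration over $y\in[0,x]$ (which contributes $\int_{[0,x]}y\,d\lambda(y)=x^2/2$) reduces the integral over the triangle to the one-dimensional expression
\begin{align*}
\int\limits_{\{y<x\}}K_{S_\delta}(x,[0,y])\,K_{S_\delta}(y,[0,x])\hspace*{1mm}d\lambda_2=\frac{1}{2}\int\limits_{[0,1]}x\,\delta(x)\left(\tfrac{\delta(x)}{x}\right)'\hspace*{1mm}d\lambda(x).
\end{align*}

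Finally I would expand $\left(\tfrac{\delta(x)}{x}\right)'=\tfrac{\delta'(x)}{x}-\tfrac{\delta(x)}{x^2}$, so that $x\,\delta(x)\left(\tfrac{\delta(x)}{x}\right)'=\delta(x)\delta'(x)-\tfrac{\delta(x)^2}{x}$. Using $\delta(x)\delta'(x)=\tfrac{1}{2}\big(\delta^2\big)'(x)$ together with the boundary values $\delta(0)=0$ and $\delta(1)=1$ yields $\int_{[0,1]}\delta\,\delta'\,d\lambda=\tfrac{1}{2}$, whence the triangle integral equals $\tfrac{1}{4}-\tfrac{1}{2}\int_{[0,1]}\tfrac{\delta(x)^2}{x}\,d\lambda(x)$. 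Doubling and inserting this back into the displayed expression for $\tau$ gives $\tau(S_\delta)=1-4\big(\tfrac{1}{2}-\int_{[0,1]}\tfrac{\delta(x)^2}{x}\,d\lambda\big)=4\int_{[0,1]}\tfrac{\delta(x)^2}{x}\,d\lambda(x)-1$, as claimed. The only point requiring care is the bookkeeping of the kernel branches: one must confirm that on $\{y<x\}$ the factor $K_{S_\delta}(x,[0,y])$ falls into the strict branch $y<x$ while $K_{S_\delta}(y,[0,x])$ falls into the branch $x\geq y$, and that replacing $w_\delta$ by $\delta'$ on a $\lambda$-null set together with the interchange of integration order is harmless; everything else is routine one-dimensional integration.
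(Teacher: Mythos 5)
Your proposal is correct and follows essentially the same route as the paper: the identity~(\ref{mua}) combined with symmetry of $S_\delta$, substitution of the Markov kernel from Theorem~\ref{kern}, and the observation that $\int_{[0,1]}\delta\,\delta'\,d\lambda=\tfrac{1}{2}$. The only cosmetic difference is that you double the integral over the triangle $\{y<x\}$, whereas the paper writes out both inner integrals over $[0,x]$ and $[x,1]$ and then notes they contribute equally.
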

 	\begin{proof} Consider an arbitrary $\Sd \in \CSL$ with diagonal $\delta \in \DSL$ and 
			let $w_\delta$ denote the measurable version of $\delta'$ as 
			constructed in Section \ref{MarkovSection}. 
			Symmetry of LSL-copulas implies that the Markov kernel $K_{\Sd^t}$ of 
			$\Sd^t$ coincides with the Markov kernel $K_{\Sd}$ of $\Sd$. 
			Considering equations (\ref{mua}) and using the symmetry of $\Sd$ 
			the desired identity follows via
 		\begin{align*}
 			\tau(\Sd)  &= 4\int\limits_{[0,1]^2} \Sd(x,y) \hspace*{1mm} d\mu_{\Sd}(x,y) - 1 \\
 					&= 4\bigg(\frac{1}{2} - \int\limits_{[0,1]^2} K_{\Sd}(x,[0,y]) 
 						K_{\Sd^t}(y,[0,x]) \hspace*{1mm} d\lambda_2(x,y)\bigg) - 1\\
 					&= 1 - 4\int\limits_{[0,1]} \int\limits_{[0,1]} K_{\Sd}(x,[0,y]) 
 						K_{\Sd^t}(y,[0,x]) \hspace*{1mm} d\lambda(y)d\lambda(x)\\
 					&= 1 - 4 \int\limits_{[0,1]} \bigg( \int\limits_{[0,x]} 
 						\left(\tfrac{y}{x}w_\delta(x) - \tfrac{y}{x^2}\delta(x)\right)
 						\tfrac{1}{x}\delta(x)\hspace*{1mm} d\lambda(y) \\
 					&\hspace*{1.8cm} + \int\limits_{[x,1]} \left( \tfrac{1}{y}\delta(y)
 						\left(\tfrac{x}{y}w_\delta(y)- \tfrac{x}{y^2} \delta(y) \right) 
 						\right) d\lambda(y)\bigg)\hspace*{1mm} d\lambda(x)\\
 					&= 1 - 4 \int\limits_{[0,1]} 2\cdot\int\limits_{[0,x]} 
 						\tfrac{y}{x^2}w_\delta(x)\delta(x) - \tfrac{y}{x^3}\delta(x)^2 
 						\hspace*{1mm} d\lambda(y) d\lambda(x) \\
 					&= 1 - 8\int\limits_{[0,1]} \tfrac{1}{x^2} w_\delta(x)\delta(x) - 
 						\tfrac{1}{x^3}\delta(x)^2 \int\limits_{[0,x]} y\hspace*{1mm} 
 						d\lambda(y) d\lambda(x) \\
 					&= 1 - 4\int\limits_{[0,1]} w_\delta(x)\delta(x) - 
 						\tfrac{1}{x}\delta(x)^2 \hspace*{1mm}d\lambda(x) \\
 					&= 1 - 4\int\limits_{[0,1]} w_\delta(x)
 						\delta(x)  \hspace*{1mm}d\lambda(x) + 4\int\limits_{[0,1]}
 						\tfrac{\delta(x)^2}{x} \hspace*{1mm}d\lambda(x) \\
 					&= 1 - 4\int\limits_{[0,1]} u \hspace*{1mm}du +  4\int\limits_{[0,1]} 
 						\tfrac{\delta(x)^2}{x} \hspace*{1mm}d\lambda(x)
 						= 4\int\limits_{[0,1]}\tfrac{\delta(x)^2}{x} 
 						\hspace*{1mm}d\lambda(x) -1.
 		\end{align*}
 	\end{proof}
 	\begin{remark}
   Since some other measures of association might also be of interest in the context of applications, Lemma 
   \ref{gammabeta} gathers the resulting formulas for Ginni's $\gamma$, 
   Spearman's footrule $\phi$ and Blomqvist's $\beta$  of LSL copulas. 
    
   \end{remark} 	
 	\begin{example}\label{ex:deltaab}
 		We again return to the diagonals $l_a,u_a \in \DSL$ considered in Example
 		\ref{KerneAB}. Applying Lemma \ref{sprho} and \ref{kendaltau} directly yields
		$$\tau(S_{l_a}) = \rho(S_{l_a}) = a^4$$
		$$\tau(S_{u_a}) = 1-a^2, \hspace*{2mm} \rho(S_{u_a}) = 1-a^3$$
 	\end{example}
	It is well-known (and also follows directly from equation (\ref{rhoalg})) that Spearman's $\rho$ preserves 
	convex combinations, i.e., 
	$$ \rho(\alpha A + (1-\alpha) B) = \alpha\rho(A) + (1-\alpha) \rho(B) $$
	holds for $\alpha \in \unit$ and $A,B \in \C$. \\
    For Kendall's $\tau$ the situation is different, in general it does neither preserve 
    convex combinations, not even  	
	$$ \tau(\alpha A + (1-\alpha)B) \leq \alpha \tau(A) + (1-\alpha)\tau(B) $$
	needs to hold. 
		In fact, a straightforward calculation (also see \cite{FuchsSchmidt}) shows that 
		for the Fréchet family  \\
		$$\mathcal{F} := \left\lbrace \alpha W + \beta M + (1-\alpha - \beta)\Pi: \, \alpha, \beta \in [0,1],
		\alpha + \beta \leq 1 \right\rbrace$$ we have  
		\begin{align*}
			\tau(\alpha W + (1-\alpha - \beta)\Pi + \beta M) = \tfrac{(\beta-\tau)(\beta +
			\alpha + 2)}{3},
		\end{align*}
		hence, considering $\beta = 0$, $\alpha = \frac{1}{4}$, we obtain 
		$$ \tau\left( \tfrac{1}{4}\, W + \tfrac{3}{4} \, \Pi \right) >  \tfrac{1}{4}\,\tau\left(
		W \right) + \tfrac{3}{4}\, \tau\left(\Pi \right).$$
	For LSL copulas, however, Kendall's $\tau$  interpreted as function mapping $\CSL$ to $[0,1]$
	is strictly convex as the following result shows: 
	\begin{Lemma}\label{strictconvex}
		For $\delta_1,\delta_2 \in \DSL$ with $\delta_1 \neq \delta_2$ and 
		$\alpha \in [0,1]$ the following inequality holds:
		\begin{align}
			\tau(\lambda S_{\delta_1} + (1-\lambda)S_{\delta_2}) < 
			\lambda \tau(S_{\delta_1}) + (1-\lambda)\tau(S_{\delta_2})
		\end{align}		
	\end{Lemma}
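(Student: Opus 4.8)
The plan is to reduce everything to the explicit formula for Kendall's $\tau$ from Lemma~\ref{kendaltau} together with the linearity of the assignment $\delta \mapsto S_\delta$. First I would record the elementary but decisive observation that convex combinations of LSL copulas are again LSL copulas whose diagonal is the corresponding convex combination of the diagonals: since by (\ref{semcopula1}) the value $S_\delta(x,y)$ depends \emph{linearly} on $\delta$, we have $\lambda S_{\delta_1} + (1-\lambda)S_{\delta_2} = S_{\lambda\delta_1 + (1-\lambda)\delta_2}$, and $\lambda\delta_1 + (1-\lambda)\delta_2 \in \DSL$ because $\DSL$ is convex. Writing $\delta := \lambda\delta_1 + (1-\lambda)\delta_2$, the left-hand side $\tau(\lambda S_{\delta_1} + (1-\lambda)S_{\delta_2}) = \tau(S_\delta)$ then becomes directly accessible through Lemma~\ref{kendaltau}.

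The next step is purely mechanical: I would apply Lemma~\ref{kendaltau} to $S_\delta$, $S_{\delta_1}$ and $S_{\delta_2}$ and subtract. The additive constant $-1$ cancels, the common factor $4$ can be pulled out, and the whole discrepancy collapses into a single integral,
\begin{align*}
\lambda\tau(S_{\delta_1}) + (1-\lambda)\tau(S_{\delta_2}) - \tau(S_\delta)
= 4\int\limits_{[0,1]} \frac{\lambda\,\delta_1(x)^2 + (1-\lambda)\,\delta_2(x)^2 - \bigl(\lambda\delta_1(x)+(1-\lambda)\delta_2(x)\bigr)^2}{x}\, d\lambda(x).
\end{align*}
Invoking the pointwise identity $\lambda a^2 + (1-\lambda)b^2 - (\lambda a + (1-\lambda)b)^2 = \lambda(1-\lambda)(a-b)^2$, valid for all reals $a,b$, the integrand simplifies and one obtains the clean expression
\begin{align*}
\lambda\tau(S_{\delta_1}) + (1-\lambda)\tau(S_{\delta_2}) - \tau(S_\delta)
= 4\,\lambda(1-\lambda)\int\limits_{[0,1]} \frac{\bigl(\delta_1(x)-\delta_2(x)\bigr)^2}{x}\, d\lambda(x).
\end{align*}

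The only part requiring a genuine (though short) argument is to show that this quantity is strictly positive, and this is what I regard as the main obstacle of the proof. For $\lambda \in (0,1)$ the prefactor $4\lambda(1-\lambda)$ is positive and the integrand $(\delta_1(x)-\delta_2(x))^2/x$ is non-negative, so it suffices to verify that the integral does not vanish. Here I would use that $\delta_1,\delta_2$ are continuous (indeed $2$-Lipschitz) and distinct: there is a point $x_0 \in (0,1)$ with $\delta_1(x_0)\neq\delta_2(x_0)$, and by continuity $(\delta_1-\delta_2)^2$ stays bounded away from $0$ on an entire neighbourhood of $x_0$ contained in $(0,1)$, on which $1/x$ is bounded below as well; this forces the integral to be strictly positive. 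Hence the left-hand difference is strictly positive, which is exactly the claimed strict convexity. (For the degenerate weights $\lambda\in\{0,1\}$ both sides coincide, so the strict inequality is to be understood for $\lambda\in(0,1)$.)
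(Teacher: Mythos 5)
Your proof is correct and follows essentially the same route as the paper's: identify $\lambda S_{\delta_1}+(1-\lambda)S_{\delta_2}$ with $S_{\lambda\delta_1+(1-\lambda)\delta_2}$, plug into the $\tau$-formula of Lemma \ref{kendaltau}, and exploit strict convexity of $x\mapsto x^2$ pointwise under the integral (the paper reduces to $\lambda=\tfrac12$ whereas you treat general $\lambda$ via the identity $\lambda a^2+(1-\lambda)b^2-(\lambda a+(1-\lambda)b)^2=\lambda(1-\lambda)(a-b)^2$). Your explicit continuity argument for strict positivity of the integral, and your remark that the inequality can only be strict for $\lambda\in(0,1)$, are both more careful than the paper's one-line justification.
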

	\begin{proof} 
		If suffices to prove the result for $\alpha=\frac{1}{2}$. 
		Suppose that $\delta_1,\delta_2 \in \DSL$ fulfill $\delta_1 \neq \delta_2$ and
		set $\tilde{\delta} = \frac{1}{2}\delta_1 + \frac{1}{2}\delta_2$. Strict
		convexity of the mapping $x \mapsto x^2$ in combination with Lipschitz continuity of 
		diagonals and the assumption $\delta_1 \neq \delta_2$ yields
		\begin{align*}
			\tau\left( S_{\tilde{\delta}}\right) &= 4 \int\limits_{[0,1]} 		
				\tfrac{1}{x} \tilde{\delta}(x)^2 \hspace*{1mm} d\lambda(x)  -1
			= 4 \int\limits_{[0,1]} \tfrac{1}{x} \left( \tfrac{1}{2}\delta_1(x) + 
				\tfrac{1}{2}\delta_2(x)\right)^2 \hspace*{1mm} d\lambda(x) -1 \\
			&< 4 \int\limits_{[0,1]} \tfrac{1}{x}\left(\tfrac{1}{2}\delta_1^2(x) + 
				\tfrac{1}{2}\delta_2^2(x)\right) \hspace*{1mm} d\lambda(x)-1 \\
			&= 4 \int\limits_{[0,1]} \tfrac{1}{2}\tfrac{\delta_1^2(x)}{x} \hspace*{1mm} 
			d\lambda(x) -\tfrac{1}{2} + 4 \int\limits_{[0,1]}
				\tfrac{1}{2}\tfrac{\delta_2^2(x)}{x} \hspace*{1mm} d\lambda(x)-\tfrac{1}{2}\\
			&= \tfrac{1}{2}\bigg(4 \int\limits_{[0,1]} \tfrac{\delta_1^2(x)}{x} \hspace*{1mm} 
			d\lambda(x) - 1 \bigg) + \tfrac{1}{2}
            \bigg(4 \int\limits_{[0,1]} 
			\tfrac{\delta_2^2(x)}{x} \hspace*{1mm} d\lambda(x) - 1 \bigg)\\
			&= \tfrac{1}{2}\tau\left( S_{\delta_1}\right)+
			\tfrac{1}{2}\tau\left( S_{\delta_2}\right).
		\end{align*}
	\end{proof}

 	\subsection{The $\tau$-$\rho$-region $\Omega^{LSL}$ determined by $\CSL$}
 	\noindent 
 	Building upon the derived formulas for $\tau$ and $\rho$ according to Lemma \ref{sprho} and
 	Lemma \ref{kendaltau} we now study the exact region $\Omega^{LSL}$ 
 	determined by Kendall's $\tau$ and Spearman's $\rho$ of LSL copulas and defined by
    \begin{equation}\label{def:taurho}
      \Omega^{LSL}:=\left\{(\tau(S_\delta),\rho(S_\delta)): \, S_\delta \in \CSL \right\}.
    \end{equation}     	
 	Possibly triggered by the paper \cite{SPT} in which the exact $\tau$-$\rho$-region for the full class $\mathcal{C}$
 	was derived, several papers on the regions determined by pairs of dependence measures (considering the 
 	full class $\mathcal{C}$ or specific important subclasses) appeared in the past ten years. 
 	For more details we refer, e.g., to the papers 
 	\cite{kokol2023exact, bukovvsek2024exact, bukovvsek2022exact, bukovvsek2021spearman, Mroz} and the references therein.
 	
 	Considering that $\Pi \leq \Sd \leq M$ holds for 	every $\Sd \in \CSL$ we obviously have 
 	$\Omega^{LSL} \subseteq \unit^2$. 
 	Due to countless simulations we conjecture that $\Omega^{LSL}$ is given by 
     \begin{equation}\label{ew:lower.upper1}
       R=\left\{(x,y) \in [0,1]^2: x \leq y \leq 1 - (1-x)^{\frac{3}{2}}  \right\}.
    \end{equation} 	
    \begin{figure}[ht!]
		\centering
		\includegraphics[scale=0.6]{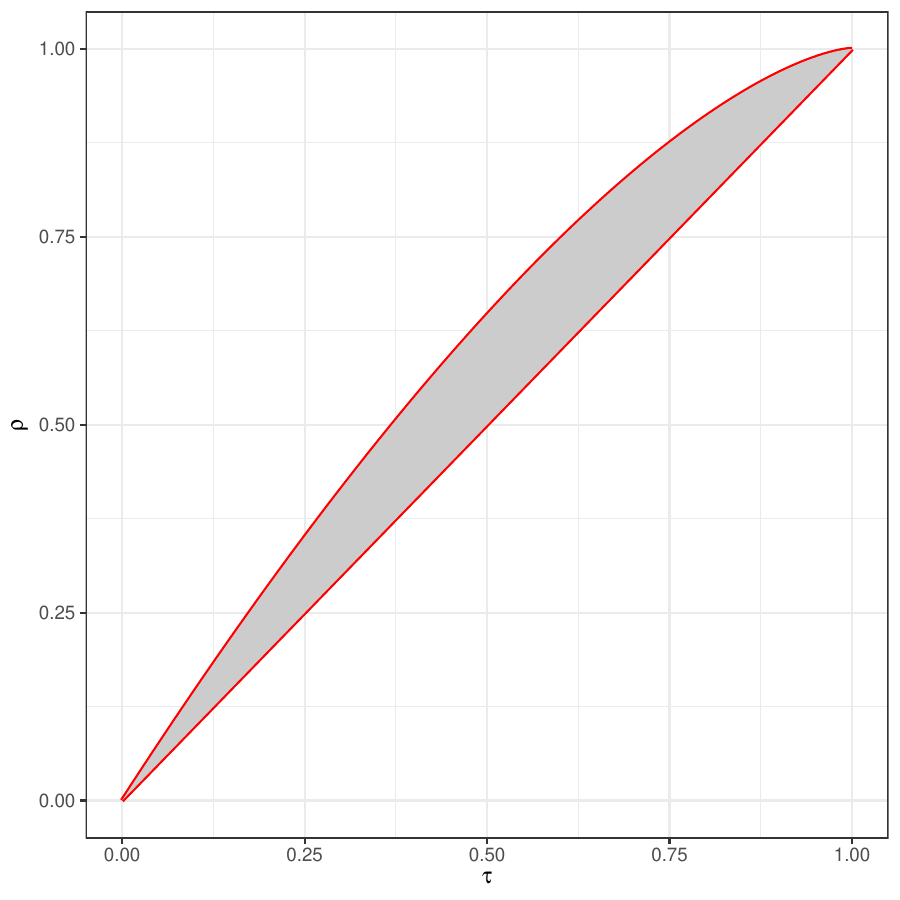}
		\caption{The conjectured $\tau$-$\rho$-region $\Omega^{LSL}$.}
		\label{fig:region1}
	\end{figure}
    In other words, writing
	\begin{align*}
		\Phi_l:\unit \to \unit, \hspace*{0.5cm}\Phi_l(x)&=x,\\
		\Phi_u:\unit \to \unit, \hspace*{0.5cm}\Phi_u(x)&=1-(1-x)^{\frac{3}{2}}
	\end{align*}	
	we conjecture that 
	$$
	\rho(S_\delta) \in [\Phi_l(\tau(S_\delta)),\Phi_u(\tau(S_\delta))] = 
	\left[\tau(S_\delta),\,1-(1-\tau(S_\delta))^{\frac{3}{2}}\right]
	$$
	holds for every $S_\delta \in \CSL$. The conjectured set $R$ is depicted in Figure \ref{fig:region1}.
    We have only been able to prove the lower inequality and to show that it is sharp (i.e., best possible), 
    the upper one remains an open question.  
    We now provide a proof for the lower bound, verify its sharpness, and then, despite not knowing 
    the upper bound, show that $\Omega^{LSL}$ is convex and compact.   	
 	\begin{Theorem}
	For every $\Sd \in \CSL$ the inequality
		\begin{align}\label{eq1}
			\tau(\Sd) &\leq \rho(\Sd)
		\end{align}
	holds. Moreover, inequality (\ref{eq1}) is sharp, i.e., for every $x \in \unit$ there exists some 
	LSL copula $S_\delta$ fulfilling $\tau(S_{\delta}) = \rho(S_\delta) = x$.
	\end{Theorem}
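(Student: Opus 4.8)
The plan is to exploit the two clean formulas from Lemmas \ref{sprho} and \ref{kendaltau}, namely $\rho(S_\delta) = 12\int_{[0,1]} \delta(x)x\, d\lambda(x) - 3$ and $\tau(S_\delta) = 4\int_{[0,1]} \frac{\delta(x)^2}{x}\, d\lambda(x) - 1$, and reduce the inequality $\tau(S_\delta) \leq \rho(S_\delta)$ to a single pointwise (or integrated) comparison between the integrands. Subtracting, the claim $\rho(S_\delta) - \tau(S_\delta) \geq 0$ becomes
\begin{align*}
  12\int_{[0,1]} \delta(x)x\, d\lambda(x) - 4\int_{[0,1]} \frac{\delta(x)^2}{x}\, d\lambda(x) \geq 2,
\end{align*}
so after dividing by $4$ it suffices to show $\int_{[0,1]} \big(3\,\delta(x)x - \tfrac{\delta(x)^2}{x}\big)\, d\lambda(x) \geq \tfrac{1}{2}$. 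The first thing I would try is to establish the stronger pointwise bound $3\,\delta(x)x - \frac{\delta(x)^2}{x} \geq x^2$ for all $x$, since integrating $x^2$ over $[0,1]$ gives exactly $\tfrac13$, which is too weak; so a crude pointwise estimate will not close the gap and I will need to be more careful.

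A cleaner route is to factor the integrand using the known envelope $x^2 = \delta_\Pi(x) \leq \delta(x) \leq \delta_M(x) = x$ from \eqref{abschaetzen}. Writing $g(x):=\delta(x)/x \in [x,1]$, the integrand becomes $x^2\big(3g(x) - g(x)^2\big)$, and since $g \mapsto 3g - g^2$ is increasing on $[0,1]$ with value $\geq 3x - x^2$ at $g=x$, I would compare $S_\delta$ against the product copula baseline. The honest approach is to consider the difference $\rho(S_\delta)-\tau(S_\delta)$ as a functional of $\delta$ and minimize it over $\DSL$: I expect the minimum to be attained exactly on the boundary diagonals (the extreme points $l_a$ or $u_a$ from Example \ref{KerneAB}), which is precisely where sharpness should come from.

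For sharpness I would lean directly on the computation in Example \ref{ex:deltaab}, which gives $\tau(S_{l_a}) = \rho(S_{l_a}) = a^4$. Since $a \mapsto a^4$ is a continuous surjection from $[0,1]$ onto $[0,1]$, for any target $x \in [0,1]$ I set $a := x^{1/4}$ and obtain an LSL copula $S_{l_a}$ with $\tau(S_{l_a}) = \rho(S_{l_a}) = x$, establishing both that equality is attained and that every point $(x,x)$ on the diagonal lies in $\Omega^{LSL}$. This disposes of sharpness essentially for free, so the whole weight of the argument falls on the inequality \eqref{eq1} itself.

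The hard part will be proving $\tau(S_\delta) \leq \rho(S_\delta)$ with a genuinely tight constant, since naive pointwise bounds lose too much. The trick I would pursue is integration by parts together with the growth constraint $\delta(x) \leq x\delta'(x) \leq 2\delta(x)$ from \eqref{abschaetzen}. Concretely, in the $\tau$-integral I would rewrite $\frac{\delta^2}{x}$ and integrate by parts to introduce $\delta'$, then invoke $x\delta'(x) \leq 2\delta(x)$ to bound the $\delta'$-term, converting the $\tau$-functional into something directly comparable to the $\rho$-functional $\int \delta(x)x\, d\lambda(x)$; the Lipschitz/growth inequality is exactly the structural feature of $\DSL$ that should make the two integrals comparable with the correct constant $\tfrac12$. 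I anticipate that the successful manipulation reproduces the boundary case $\delta = l_a$ as the equality case, which both confirms sharpness and signals that no slack was wasted.
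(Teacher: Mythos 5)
Your sharpness argument is correct and is exactly what the paper does: Example \ref{ex:deltaab} gives $\tau(S_{l_a})=\rho(S_{l_a})=a^4$, and $a\mapsto a^4$ surjects onto $[0,1]$. The problem is the inequality itself, which you never actually prove. Your reduction via Lemmas \ref{sprho} and \ref{kendaltau} to $\int_{[0,1]}\bigl(3x\delta(x)-\tfrac{\delta(x)^2}{x}\bigr)\,d\lambda(x)\geq\tfrac12$ is a correct restatement, but note that \emph{equality} holds there simultaneously for $\delta=\delta_\Pi$, for $\delta=\delta_M$, and for every $l_a$ in between: writing $\varphi=\delta/\mathrm{id}$, the excess over $\tfrac12$ equals $\int_{[0,1]}x\,(\varphi(x)-x)(2x-\varphi(x))\,d\lambda(x)$, whose integrand changes sign for admissible $\varphi$ (e.g.\ for $\varphi\equiv1$ it is $x(1-x)(2x-1)$, negative on $(0,\tfrac12)$, integrating to exactly $0$). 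So the inequality is genuinely global: no pointwise estimate on the integrand can work, and — as you yourself observe — the crude bounds lose too much. At that point your proof consists of the sentence ``the trick I would pursue is integration by parts together with the growth constraint \eqref{abschaetzen}'' plus an expectation that it closes; that is a plan, not an argument, and it is far from routine to execute because the constant has zero slack at both extremes $\delta_\Pi$ and $\delta_M$.

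The paper sidesteps this entirely with a different decomposition: the Fredriks--Nelsen identity \eqref{Nelsencharac} expresses $\tfrac14(\rho(C)-\tau(C))$ as $\int_{[0,1]^2}\bigl(C-x\,\partial_1 C-y\,\partial_2 C+\partial_1 C\,\partial_2 C\bigr)\,d\lambda_2$, and after substituting the explicit LSL Markov kernel from Theorem \ref{kern} and using symmetry, the integrand for $y<x$ collapses to $K_{S_\delta}(x,[0,y])\cdot\tfrac{\delta(x)-x^2}{x}$, which \emph{is} pointwise non-negative since $\delta\geq\delta_\Pi$. That bilinear identity is the missing idea: it linearizes the quadratic term $\delta^2/x$ against the kernel so that positivity becomes visible locally. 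To repair your proof you would either need to import \eqref{Nelsencharac} (or an equivalent kernel-level identity) or supply a genuine variational/rearrangement argument for the functional $\varphi\mapsto\int x(\varphi-x)(2x-\varphi)\,d\lambda$ over the class of non-decreasing $\varphi$ with $\varphi/\mathrm{id}$ non-increasing and $\varphi(1)=1$; neither is present in what you wrote.
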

   \begin{proof}
	Let $\Sd \in \CSL$ be arbitrary but fixed. According to \cite{FredNelsen} the following identity 
	for $\rho(C) - \tau(C)$ holds for every copula $C \in \mathcal{C}$
	(notice that the integrand may only be defined on a set $E \in \mathcal{B}([0,1]^2)$ fulfilling $\lambda_2(E)=1$):
	\begin{align}\label{Nelsencharac}
		\tfrac{1}{4}(\rho(C) - \tau(C)) = \int\limits_{[0,1]^2} C(x,y) - 
			x\tfrac{\partial C(x,y)}{\partial x} 
			- y\tfrac{\partial C(x,y)}{\partial y} + \tfrac{\partial C(x,y)}{\partial x}
				\tfrac{\partial C(x,y)}{\partial y} \hspace*{1mm} d\lambda_2(x,y).
	\end{align}
	Using the fact that LSL copulas are symmetric, using Markov kernels equation (\ref{Nelsencharac}) boils down to
	\begin{equation*}\label{NelsencharacMarkov}
	\begin{split}
		\tfrac{1}{4}(\rho(\Sd) - \tau(\Sd)) &= \int\limits_{[0,1]^2} \Sd(x,y)+ 
				K_{\Sd}(x,[0,y])( K_{\Sd}(y,[0,x])- x)\\
				&\hspace*{2cm} - yK_{\Sd}(y,[0,x]) \hspace*{1mm} d\lambda_2(x,y).
	\end{split}
	\end{equation*}
	For proving $\tau(\Sd) \leq \rho(\Sd)$ it therefore suffices to show that the last integrand is non-negative, which 
	can be done as follows: For $y < x$ it follows that
	\begin{align*}
		\Sd(x,y)+ &K_{\Sd}(x,[0,y])( K_{\Sd}(y,[0,x])- x) - yK_{\Sd}(y,[0,x]) \\
		&=
		y\frac{\delta(x)}{x} + K_{\Sd}(x,[0,y])\left(\tfrac{\delta(x)}{x}-x\right)
			-y\tfrac{\delta(x)}{x}\\
		&= K_{\Sd}(x,[0,y])\left(\tfrac{\delta(x)-x^2}{x}\right)\\
		&\geq 0.
	\end{align*}
	The case $y > x$ follows directly from the symmetry of LSL copulas, so the proof of inequality  (\ref{eq1}) is complete.
	The assertion on sharpness is a direct consequence of Example \ref{ex:deltaab}, since for 
	every $a \in [0,1]$ we have $\tau(S_{l_a}) = \rho(S_{l_a}) = a^4$. \\
	\end{proof}
	The next theorem shows that the class of copulas attaining the lower bound is very small. 
	\begin{Theorem}\label{sharpla}
	 Within the class $\CSL$ the only copulas for which we have $\tau(S_\delta)=\rho(S_\delta)$ are the copulas of the 
	 form $S_{l_a}$ according to Example \ref{ex:deltaab}.
	\end{Theorem}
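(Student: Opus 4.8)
The plan is to build directly on the identity for $\tfrac14(\rho(\Sd)-\tau(\Sd))$ established in the proof of the preceding theorem. Since the integrand there is non-negative, equality $\tau(\Sd)=\rho(\Sd)$ holds if and only if that integrand vanishes $\lambda_2$-almost everywhere. First I would restrict attention to the region $\{y<x\}$, where the integrand was shown to equal $K_{\Sd}(x,[0,y])\,\tfrac{\delta(x)-x^2}{x}$; inserting the explicit kernel from Theorem \ref{kern} rewrites this as $\tfrac{y}{x^3}\bigl(x\,w_\delta(x)-\delta(x)\bigr)\bigl(\delta(x)-x^2\bigr)$. Integrating out $y$ (which is strictly positive on a set of full measure in each fibre) shows that vanishing of the integrand is equivalent to the pointwise condition
\begin{align*}
\bigl(x\,w_\delta(x)-\delta(x)\bigr)\bigl(\delta(x)-x^2\bigr)=0 \qquad \text{for } \lambda\text{-almost every } x\in(0,1).
\end{align*}
Because $x\,w_\delta(x)-\delta(x)=x^2\bigl(\tfrac{\delta(x)}{x}\bigr)'=x^2\varphi_\delta'(x)$ wherever $\delta$ is differentiable, this says that almost every $x$ either lies in $A:=\{x:\delta(x)=x^2\}$ or satisfies $\varphi_\delta'(x)=0$.

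Next I would exploit the order structure of $\DSL$. Since $\eta_\delta(x)=\tfrac{\delta(x)}{x^2}$ is continuous, non-increasing, and bounded below by $1$ with $\eta_\delta(1)=1$, the set $A=\{\eta_\delta=1\}$ is an interval of the form $[c,1)$ for $c:=\inf A$ (with $c=1$ if $A\cap(0,1)=\emptyset$). On the complementary interval $(0,c)$ we have $\delta(x)>x^2$, so the product condition forces $\varphi_\delta'(x)=0$ for almost every $x\in(0,c)$. The decisive step is to upgrade this to genuine constancy of $\varphi_\delta$ on $(0,c)$: since $\delta$ is $2$-Lipschitz and $x\mapsto\tfrac1x$ is Lipschitz on every $[\varepsilon,1]$, the map $\varphi_\delta$, i.e.\ $x\mapsto\tfrac{\delta(x)}{x}$, is absolutely continuous on $[\varepsilon,c]$ for each $\varepsilon\in(0,c)$; an absolutely continuous function with almost-everywhere vanishing derivative is constant, so $\varphi_\delta\equiv a$ on $(0,c)$ for some constant $a\in[0,1]$.

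Finally I would match the two pieces at $x=c$. By continuity of $\varphi_\delta$ we get $\varphi_\delta(c)=a$, while $c\in A$ gives $\varphi_\delta(c)=\tfrac{\delta(c)}{c}=c$, whence $a=c$. Thus $\delta(x)=cx$ on $(0,c)$ and $\delta(x)=x^2$ on $[c,1)$, which is precisely $l_c$; the degenerate cases $c=0$ and $c=1$ yield $\delta=\delta_\Pi=l_0$ and $\delta=\delta_M=l_1$, respectively. The converse inclusion is already contained in Example \ref{ex:deltaab}, where $\tau(S_{l_a})=\rho(S_{l_a})=a^4$ was computed, so $S_{l_a}$ always attains the lower bound. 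I expect the main obstacle to be the constancy step: vanishing of the derivative almost everywhere implies constancy only because $\varphi_\delta$ is absolutely continuous rather than merely continuous and monotone, so some care is needed to justify absolute continuity away from $0$ and to pass to the limit $\varepsilon\to0$ without losing control near the origin.
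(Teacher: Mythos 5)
Your proposal is correct and follows essentially the same route as the paper: both reduce $\tau(S_\delta)=\rho(S_\delta)$ to the a.e.\ vanishing of $K_{S_\delta}(x,[0,y])\,\tfrac{\delta(x)-x^2}{x}$ on $\{y<x\}$, split the unit interval at $\inf\{x:\delta(x)=x^2\}$, and conclude that $\varphi_\delta=\delta/\mathrm{id}$ is constant below that point (the paper phrases this as solving the ODE $x\delta'=\delta$ with boundary condition $\delta(b)=b^2$, which is the same computation). Your explicit justification of the constancy step via local absolute continuity of $\varphi_\delta$ is a welcome bit of extra care that the paper's proof leaves implicit.
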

     \begin{proof}
      According to the proof of the previous theorem, the condition $\tau(\Sd) = \rho(\Sd)$ is equivalent to having  
	\begin{align*}
		\Sd(x,y)+ K_{\Sd}(x,[0,y])( K_{\Sd}(y,[0,x])- x)- yK_{\Sd}(y,[0,x])=0
	\end{align*}
	for $\lambda_2$-almost all $(x,y) \in [0,1]^2$. As a direct consequence, for $\lambda_2$-almost all 
	$(x,y)$ with $y < x$ 
	\begin{align}\label{gl:kernsharp}
		 K_{\Sd}(x,[0,y])\left(\tfrac{\delta(x)-x^2}{x}\right) = 0
	\end{align}
	had to hold. Set $b := \inf \left\lbrace x\in (0,1) : \delta(x) = x^2\right\rbrace$.
    If $b=0$ then $\delta = \delta_\Pi$ as well as $S_\delta=\Pi =S_{l_0}$ follows. 
    If $b > 0$ then for $\lambda_2$-almost all $(x,y) \in (0,b)^2$ with $y<x$ we have  
	\begin{align*}
		\frac{y}{x}w_\delta(x)-\frac{y}{x^2}\delta(x) = 0,
	\end{align*}
	which is equivalent to the condition that
	\begin{align*}
			w_\delta(x) = \frac{\delta(x)}{x}
	\end{align*}
	holds for $\lambda$-almost $x \in (0,b)$.  
	Using Lipschitz continuity of $\delta$ and solving this first order differential equation 
	with the boundary condition $\delta(b)=b^2$ directly yields $$ \delta(x) = b\,x$$
	for $x \in [0,b]$, which completes the proof since we have shown $\delta=l_b$. 
\end{proof}     	
	
	\begin{Theorem}\label{thm:final}
		The set $\Omega^{LSL}$ is convex and compact.		
	\end{Theorem}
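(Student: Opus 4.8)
The plan is to establish convexity and compactness separately, exploiting the explicit formulas for $\tau$ and $\rho$ derived in Lemma \ref{sprho} and Lemma \ref{kendaltau} together with the structure of $\DSL$ as a compact convex set of Lipschitz functions. Writing the concordance map as $\Psi:\CSL \to \R^2$, $\Psi(S_\delta) = (\tau(S_\delta),\rho(S_\delta))$, we have $\Omega^{LSL} = \Psi(\CSL)$, so the goal is to show this image is convex and compact.

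For compactness I would argue as follows. The space $(\CSL, d_\infty)$ is compact (as noted in the introduction, $\CSL$ is a closed convex subset of the compact space $(\C,d_\infty)$). Hence it suffices to show that $\Psi$ is continuous, since the continuous image of a compact set is compact. Continuity of $\rho$ in $d_\infty$ is immediate from equation (\ref{rhoalg}), as $d_\infty$-convergence implies convergence of the integral $\int C \, d\lambda_2$. For $\tau$ I would use the formula from Lemma \ref{kendaltau}, namely $\tau(S_\delta)=4\int_{[0,1]}\frac{\delta(x)^2}{x}\,d\lambda(x)-1$: if $S_{\delta_n} \to S_\delta$ uniformly then the diagonals converge uniformly, $\delta_n \to \delta$, and since each $\delta(x)\le x$ the integrand $\frac{\delta_n(x)^2}{x}$ is dominated (by $x$, which is integrable) and converges pointwise, so dominated convergence gives $\tau(S_{\delta_n}) \to \tau(S_\delta)$. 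This establishes compactness.

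For convexity the key obstacle is that the two measures behave differently under convex combinations of copulas. Here is where care is needed: $\rho$ preserves convex combinations (it is affine in $C$), but $\tau$ is genuinely convex on $\CSL$ by Lemma \ref{strictconvex}, not affine. So a convex combination $\lambda S_{\delta_1} + (1-\lambda)S_{\delta_2}$ does \emph{not} map to the segment between $\Psi(S_{\delta_1})$ and $\Psi(S_{\delta_2})$; its $\tau$-coordinate lies strictly below the linear interpolation. The plan is therefore \emph{not} to use the convex-combination copula directly, but to parametrize $\Omega^{LSL}$ by the $\tau$-value and show the fiber structure is convex. Concretely, I would show that for any two points $(\tau_1,\rho_1),(\tau_2,\rho_2) \in \Omega^{LSL}$ and any $s\in[0,1]$ the point $(s\tau_1+(1-s)\tau_2,\, s\rho_1+(1-s)\rho_2)$ again lies in $\Omega^{LSL}$.

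The cleanest route to the convexity of the image is to exhibit, for each fixed value of $\tau$, that the attainable set of $\rho$-values forms an interval, and that the lower and upper envelopes of $\Omega^{LSL}$ (as functions of $\tau$) are convex and concave respectively; alternatively, one constructs explicit interpolating paths in $\DSL$. I expect the main work to be producing a one-parameter family of diagonals interpolating between any two prescribed $(\tau,\rho)$ pairs while sweeping the target point. A convenient tool is the convexity of $\DSL$ itself: since $\rho$ is affine and $\tau$ is continuous, for a fixed pair $\delta_1,\delta_2$ the curve $\lambda \mapsto (\tau(\lambda S_{\delta_1}+(1-\lambda)S_{\delta_2}),\rho(\lambda S_{\delta_1}+(1-\lambda)S_{\delta_2}))$ is a continuous path in $\Omega^{LSL}$ joining $\Psi(S_{\delta_1})$ to $\Psi(S_{\delta_2})$ whose $\rho$-coordinate is exactly the linear interpolation; combining such paths with the extreme diagonals $l_a,u_a$ from Example \ref{KerneAB} (which realize the lower boundary and idempotent elements) should let one fill in the full convex hull. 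The hardest part will be verifying that these interpolating paths, together with boundary families, actually cover every point of the convex hull of $\Omega^{LSL}$ rather than merely a connected subset; I would handle this by showing the boundary functions bounding $\Omega^{LSL}$ are respectively convex and concave, so that the region trapped between them is automatically convex.
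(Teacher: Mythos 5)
Your compactness argument is correct and is exactly the paper's: $(\CSL,d_\infty)$ is compact, the map $S_\delta\mapsto(\tau(S_\delta),\rho(S_\delta))$ is continuous (your dominated-convergence justification for $\tau$ via Lemma \ref{kendaltau} is a perfectly good way to make this explicit), and continuous images of compact sets are compact.

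The convexity part, however, is a plan rather than a proof, and the step you yourself call ``the hardest part'' is exactly the one that is missing. You correctly identify the obstacle -- $\rho$ is affine on $\CSL$ while $\tau$ is strictly convex by Lemma \ref{strictconvex}, so convex combinations of copulas do not trace out segments in the $(\tau,\rho)$-plane -- and you then offer two candidate routes: (a) show each vertical fiber of $\Omega^{LSL}$ is an interval whose lower envelope is convex and whose upper envelope is concave, or (b) build interpolating paths from convex combinations together with the families $l_a,u_a$. Neither is carried out. Route (a) needs two facts you do not address: that the fiber $\{\rho(S_\delta):\tau(S_\delta)=c\}$ is an interval (this requires a connectedness argument for the level set of $\tau$ inside $\DSL$), and that the upper envelope is non-decreasing in $\tau$ (without which midpoint concavity does not follow from considering $\delta_3=\tfrac{1}{2}(\delta_1+\delta_2)$); moreover the upper envelope is not known explicitly -- the paper only conjectures $1-(1-x)^{3/2}$ -- so its concavity cannot be read off. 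Route (b) is essentially what the paper does, but the paper supplies the mechanism you leave open: to hit the midpoint $\tfrac{1}{2}(\tau(\delta_1),\rho(\delta_1))+\tfrac{1}{2}(\tau(\delta_2),\rho(\delta_2))$ it forms $\delta_3=\tfrac{1}{2}(\delta_1+\delta_2)$, which has the correct $\rho$-coordinate but a strictly too small $\tau$-coordinate, and then sweeps the two-parameter family $h_{\alpha,a}=(1-\alpha)\delta_3+\alpha l_a$: each curve $\Gamma_a$ joins $(\tau(\delta_3),\rho(\delta_3))$ to the diagonal point $(a^4,a^4)$, and a continuity/intermediate-value argument in $a$ yields $a_0$ and then $\alpha_0$ with $(\tau(h_{\alpha_0,a_0}),\rho(h_{\alpha_0,a_0}))$ equal to the target. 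Until you either complete such an explicit sweep or close the two gaps in route (a), the convexity of $\Omega^{LSL}$ is not established.
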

	\begin{proof} 
	   Simplifying notation we will write $\tau(\delta) := \tau(S_{\delta})$ 
		and $\rho(\delta) := \rho(S_{\delta})$ for every $\delta \in \DSL$ throughout 
		the rest of the proof.
		Continuity of concordance measures and the fact that continuous images of 
		compact sets are compact imply that
		$\Omega^{LSL}$ is compact. It therefore remains to show convexity, which can be done 
		as follows:
		Consider two points $(\tau(\delta_1), \rho(\delta_1))$, $(\tau(\delta_2), 
		\rho(\delta_2))$ in $\Omega^{LSL}$ and, without loss of generality assume $\delta_1 \neq \delta_2$. 
		We want to show the existence of some $\delta \in \DSL$ fulfilling
        \begin{equation}\label{glconvex}
        (\tau(\delta), \rho(\delta)) = \tfrac{1}{2}\left(\tau(\delta_1), 
        \rho(\delta_1)  \right) \, + 
         \tfrac{1}{2}\left(\tau(\delta_2), \rho(\delta_2)  \right).
        \end{equation}
        Notice that finding such a $\delta$ is trivial if either $\rho(\delta_1)=\rho(\delta_2)$ or 
        $\tau(\delta_1)=\rho(\delta_1)$ and $\tau(\delta_2)=\rho(\delta_2)$ holds, since in this case
        a convex combination of $\delta_1$ and $\delta_2$ will do. In what follows we will therefore assume  
        that none of these two conditions holds.  
		Convexity of $\DSL$ implies that $\delta_3 := \tfrac{1}{2}(\delta_1 +\delta_2)$ is an element of $\DSL$. 
		Moreover we obviously have 
		$$
		\left(\tau(\delta_3), \rho(\delta_3) \right) = \left(\tau(\delta_3), 
		\tfrac{1}{2}(\rho(\delta_1)+\rho(\delta_2)) \right),
		$$
        and Lemma \ref{strictconvex} implies  
        $\tau(\delta_3) < \tfrac{1}{2}(\tau(\delta_1)+ \tau(\delta_2))$.
        For $\alpha,a \in \unit$ define the function $h_{\alpha,a}: [0,1] \rightarrow [0,1]$ by 
		$$
		h_{\alpha,a}(t):= (1-\alpha)\delta_3(t) + \alpha l_a(t).
		$$ 
        Then obviously $h_{\alpha,a} \in \DSL$, and for every pair of sequences 
        $(\alpha_n)_{n \in \mathbb{N}},(a_n)_{n \in \mathbb{N}}$ in $[0,1]$ 
        converging to $\alpha \in [0,1]$ and $a\in [0,1]$, respectively, we have that $(h_{\alpha_n,a_n})_{n \in \mathbb{N}}$
        converges uniformly to $h_{\alpha,a}$. 
        As a consequence, the mapping $\iota: [0,1]^2 \rightarrow [0,1]^2$, defined by
        $$
        \iota(\alpha,a)=\left(\tau(h_{\alpha,a} ),\rho(h_{\alpha,a} ) \right)
        $$ 
        is continuous. For every $a \in [0,1]$ defining $\Gamma_a$ by
        $$
        \Gamma_a:=\left\{\left(\tau(h_{\alpha,a} ),\rho(h_{\alpha,a})\right): \, \alpha \in [0,1] \right\},
        $$ 
        it therefore follows that $\Gamma_a$ is compact and connected, and that $\Gamma_a$ contains the points 
        $(\tau(\delta_3),\rho(\delta_3))$ and $(\tau(l_a),\rho(l_a))$ (see Figure \ref{fig:thmfinal} for 
        an illustration). Continuity in $a$ implies the existence of some $a_0 \in [0,1]$ fulfilling 
        $$
       \tfrac{1}{2}\left(\tau(\delta_1), 
        \rho(\delta_1)  \right) \, + 
         \tfrac{1}{2}\left(\tau(\delta_2), \rho(\delta_2)  \right) \in \Gamma_{a_0}.
        $$ 
        Having that, by construction of $\Gamma_a$ there exists some $\alpha_0$ with  
        $$
        \left(\tau(h_{\alpha_0,a_0} ),\rho(h_{\alpha_0,a_0})\right) = 
        \tfrac{1}{2}\left(\tau(\delta_1), 
        \rho(\delta_1)  \right) \, + 
         \tfrac{1}{2}\left(\tau(\delta_2), \rho(\delta_2)  \right).
         $$
         In other words, the diagonal 
        $h_{\alpha_0,a_0} \in \DSL$ fulfills equation (\ref{glconvex}), which completes the proof. 
       	\end{proof}
	
\begin{figure}[ht!]
		\centering
		\includegraphics[scale=0.54]{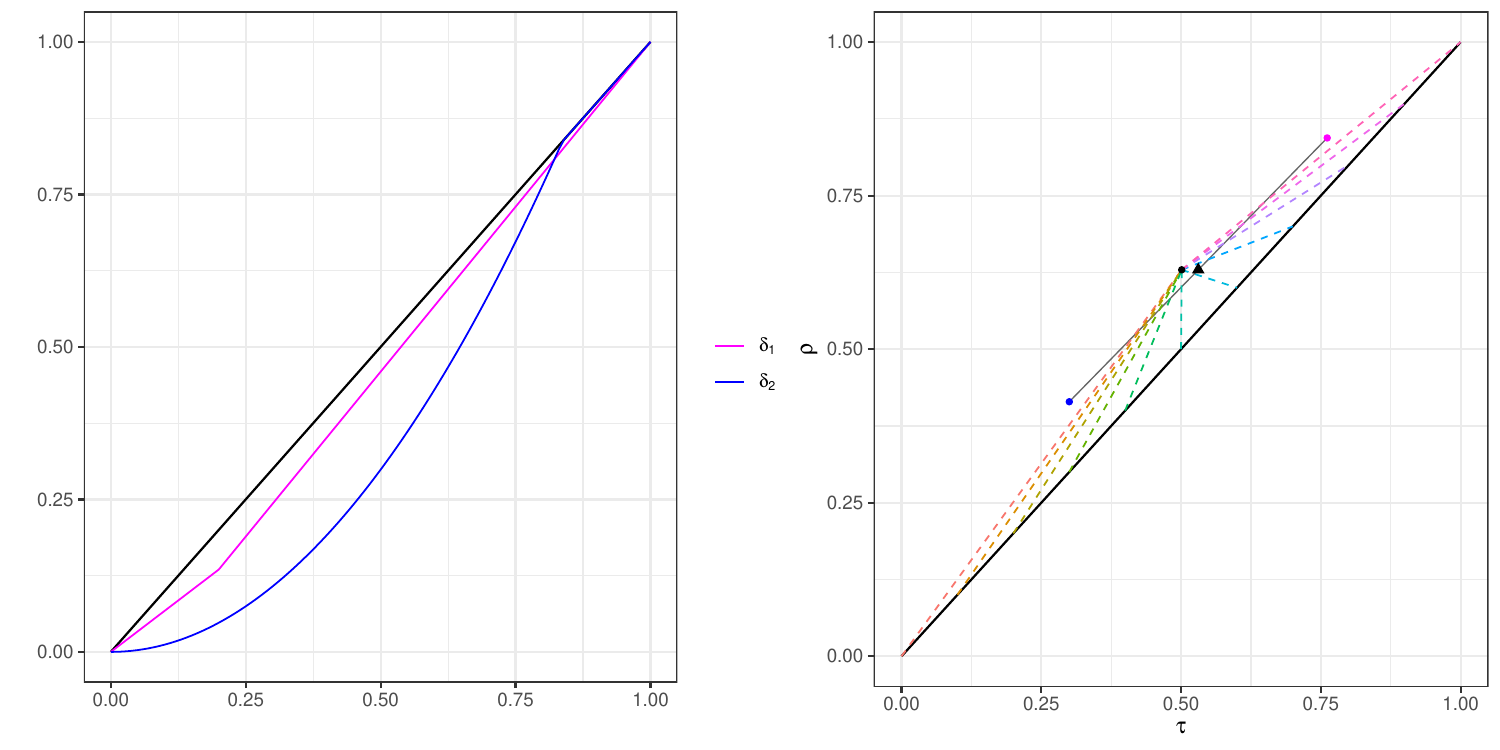}
		\caption{Illustration of the construction used in the proof of Theorem \ref{thm:final}.
		The magenta and the blue points denote $\left(\tau(\delta_1),\rho(\delta_1)  \right)$ and 
		$\left(\tau(\delta_2),\rho(\delta_2)  \right)$, respectively, the 
		black triangle their arithmetic mean. 
		The black point denotes $\left(\tau(\delta_3), \rho(\delta_3) \right)$, the 
		 dashed lines originating from it the sets $\Gamma_a$ with 
		$a \in \{0,\frac{1}{10},\ldots,\frac{9}{10},1\}$.}
		\label{fig:thmfinal}
	\end{figure}

\subsection*{Acknowledgements}
\noindent The first author gratefully acknowledges the support of the EXDIGIT (Excellence in 
Digital Sciences and Interdisciplinary Technologies) project, funded by Land Salzburg under 
grant number 20204-WISS/263/6-6022. \\
The second author gratefully acknowledges the support of the WISS 2025 project 
‘IDA-lab Salzburg’ (20204-WISS/225/197-2019 and 20102-F1901166-KZP).

\bibliographystyle{plain}

\appendix
\section{ }\label{appendix}
 \noindent It is well known that Ginni's $\gamma$, Spearman's footrule $\phi$ and 
 Blomqvist's $\beta$ of a pair $(X,Y)$ of random variables only depend on the 
 underlying copula $C$ and that the following formulas hold (see \cite{NelsenCopulas}):
 	\begin{align*}
 		\gamma(C) &= 4\int\limits_{[0,1]} C(x,x) \hspace*{1mm} d\lambda(x)
 					+ 4\int\limits_{[0,1]} C(x,1-x) \hspace*{1mm} d\lambda(x) - 2\\
 		\phi(C) &= 6\int\limits_{[0,1]} C(x,x) \hspace*{1mm} d\lambda(x)-2\\
 		\beta(C) &= 4\hspace*{0.5mm} C\left(\tfrac{1}{2},\tfrac{1}{2}\right)-1
 	\end{align*}
 	For LSL copulas the afore-mentioned formulas only depend on the diagonals and the following simple lemma holds:
 	\begin{Lemma}\label{gammabeta}
 		For every lower semilinear copula $\Sd \in \CSL$ Ginni's $\gamma$ 
 		Spearman's footrule $\phi$ and Blomqvist's $\beta$ are given by
 		\begin{align}
 			\gamma(\Sd ) &= 4\int\limits_{[0,\tfrac{1}{2}]} \delta(x)+ x\tfrac{\delta(1-x)}{1-x}
 					\hspace*{1mm} d\lambda(x) + 4\int\limits_{[\tfrac{1}{2},1]} 
 					\tfrac{\delta(x)}{x} \hspace*{1mm} d\lambda(x) -2\\
 			\phi(\Sd ) &= 6\int\limits_{[0,1]} \delta(x) \hspace*{1mm} d\lambda(x)-2\\
 			\beta(\Sd ) &= 4\hspace*{0.5mm} \delta(\tfrac{1}{2})-1,
 		\end{align}
 		and fulfill $\gamma(\Sd), \phi(\Sd ), \beta(\Sd ) \in \unit$. 
 	\end{Lemma}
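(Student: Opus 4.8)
The plan is to reduce all three formulas to the diagonal of $\Sd$ (and, for Gini's $\gamma$, its anti-diagonal), exploiting that by the very definition \eqref{semcopula1} one has $\Sd(x,x)=\delta(x)$ for every $x\in\unit$: setting $y=x$ in either branch gives $x\,\tfrac{\delta(x)}{x}=\delta(x)$. Plugging this identity into the cited formula $\phi(C)=6\int_{[0,1]}C(x,x)\,d\lambda(x)-2$ yields $\phi(\Sd)=6\int_{[0,1]}\delta(x)\,d\lambda(x)-2$ at once, and likewise $\beta(\Sd)=4\,\Sd(\tfrac12,\tfrac12)-1=4\,\delta(\tfrac12)-1$ is immediate. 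Thus the footrule and the Blomqvist formula require nothing beyond the diagonal identity.

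For Gini's $\gamma$ the only extra ingredient is the anti-diagonal $x\mapsto\Sd(x,1-x)$, which I would handle by splitting at $x=\tfrac12$: since $1-x\le x$ precisely when $x\ge\tfrac12$, equation \eqref{semcopula1} gives $\Sd(x,1-x)=x\,\tfrac{\delta(1-x)}{1-x}$ on $[0,\tfrac12]$ and $\Sd(x,1-x)=(1-x)\,\tfrac{\delta(x)}{x}$ on $[\tfrac12,1]$. Combining the diagonal term $4\int_{[0,1]}\delta(x)\,d\lambda(x)$ with these two pieces, the contributions over $[0,\tfrac12]$ already produce $4\int_{[0,\tfrac12]}\bigl(\delta(x)+x\tfrac{\delta(1-x)}{1-x}\bigr)\,d\lambda(x)$, while over $[\tfrac12,1]$ the integrands collapse via the elementary identity $\delta(x)+(1-x)\tfrac{\delta(x)}{x}=\tfrac{\delta(x)}{x}$ into $4\int_{[\tfrac12,1]}\tfrac{\delta(x)}{x}\,d\lambda(x)$, giving exactly the claimed expression after subtracting $2$.

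It remains to verify $\gamma(\Sd),\phi(\Sd),\beta(\Sd)\in\unit$. The cleanest argument is that each of the three explicit functionals is monotone with respect to the pointwise order on $\C$, being an integral against a positive measure (for $\gamma$ and $\phi$) or a point evaluation (for $\beta$). Hence the bounds $\Pi\le\Sd\le M$ from Section~2 transfer directly, and since all three measures vanish at $\Pi$ and equal $1$ at $M$ (a one-line check from the formulas), we obtain $0\le\gamma(\Sd),\phi(\Sd),\beta(\Sd)\le1$. Alternatively one may argue self-contained from $x^2\le\delta(x)\le x$: this gives $\int_{[0,1]}\delta\,d\lambda\in[\tfrac13,\tfrac12]$ (settling $\phi$) and $\delta(\tfrac12)\in[\tfrac14,\tfrac12]$ (settling $\beta$), and for $\gamma$ one additionally uses $x(1-x)\le\Sd(x,1-x)\le\min\{x,1-x\}$ with $\int_{[0,1]}x(1-x)\,d\lambda(x)=\tfrac16$ and $\int_{[0,1]}\min\{x,1-x\}\,d\lambda(x)=\tfrac14$, the extremes being attained simultaneously at $\Pi$ and $M$.

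No step poses a genuine obstacle; the only mildly delicate point is the bookkeeping in the $\gamma$ computation---getting the case split at $x=\tfrac12$ correct and spotting the cancellation $\delta(x)+(1-x)\tfrac{\delta(x)}{x}=\tfrac{\delta(x)}{x}$ that brings the result into the stated (asymmetric) form rather than a more obvious symmetric one.
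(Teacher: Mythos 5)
Your proposal is correct and follows essentially the same route as the paper: the $\phi$ and $\beta$ formulas are immediate from $\Sd(x,x)=\delta(x)$, and for $\gamma$ you split the anti-diagonal at $x=\tfrac12$ and use the cancellation $\delta(x)+(1-x)\tfrac{\delta(x)}{x}=\tfrac{\delta(x)}{x}$, which is exactly the computation in the paper's proof. You additionally verify the claim $\gamma(\Sd),\phi(\Sd),\beta(\Sd)\in\unit$ (which the paper states without proof), and both of your arguments for it are sound.
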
 
 	\begin{proof}
	 The only non-obvious formula is the one concerning $\gamma$, which follows from 
	 \begin{align*}
	 	\gamma(\Sd ) &= 4\int\limits_{[0,1]} \Sd(x,x) \hspace*{1mm} d\lambda(x)
 					+ 4\int\limits_{[0,1]} \Sd(x,1-x) \hspace*{1mm} d\lambda(x) - 2\\
 				&= 4\int\limits_{[0,1]} \delta(x) \hspace*{1mm} d\lambda(x)
 				  + 4\int\limits_{[0,\frac{1}{2}]} x\tfrac{\delta(1-x)}{1-x} \hspace*{1mm} 
 				   d\lambda(x) + 4\int\limits_{[\frac{1}{2},1]} (1-x)\tfrac{\delta(x)}{x} 
 				\hspace*{1mm} d\lambda(x) - 2\\
 				&= 4\int\limits_{[0,\frac{1}{2}]} \delta(x) \hspace*{1mm} d\lambda(x)
 				  + 4\int\limits_{[0,\frac{1}{2}]} x\tfrac{\delta(1-x)}{1-x} \hspace*{1mm} 
 						d\lambda(x) + 4\int\limits_{[\frac{1}{2},1]} 
 						\tfrac{\delta(x)}{x} \hspace*{1mm} d\lambda(x) - 2\\
 				&= 4\int\limits_{[0,\frac{1}{2}]} \delta(x) + x\tfrac{\delta(1-x)}{1-x} 
 					\hspace*{1mm} d\lambda(x) + 4\int\limits_{[\frac{1}{2},1]} 
 					\tfrac{\delta(x)}{x} \hspace*{1mm} d\lambda(x) - 2.
	 \end{align*}
     \end{proof}

\end{document}